\newtheorem{theorem}{Theorem}[section]
\newtheorem{cor}[theorem]{Corollary}
\newtheorem{lem}[theorem]{Lemma}
\newtheorem{prop}[theorem]{Proposition}
\theoremstyle{definition}
\newtheorem{example}[theorem]{Example}
\newtheorem{defi}[theorem]{Definition}
\newtheorem{rem}[theorem]{Remark}
\numberwithin{equation}{section}
\DeclareMathOperator{\Hom}{Hom}
\DeclareMathOperator{\abs}{abs}
\def\ground{\mathbf k}
\def\End{\operatorname{End}}
\DeclareMathOperator{\Hoch}{Hoch}
\DeclareMathOperator{\Hochb}{\overline{Hoch}}
\def\modcat{\operatorname{\!-mod}}
\def\op{\operatorname{op}}
\def\ps{\operatorname{ps}}
\begin{document}

\title[Koszul-Morita duality]{Koszul-Morita duality}
\author{$\rm{JOSEPH\,\, CHUANG}^1$\,,\,\,  $\rm{ANDREY\,\, LAZAREV}^2$ \,\,\& $\rm{W.H.\,\, MANNAN}^3$}
\maketitle

\footnotetext[1]{Department of Mathematics, City University London, London EC\textup{1V 0}HB. email: j.chuang@city.ac.uk}
\footnotetext[2]{Mathematics and Statistics, Lancaster University, Lancaster LA\textup{1 4}YF. email: a.lazarev@lancaster.ac.uk}
\footnotetext[3]{Mathematics and Statistics, Lancaster University, Lancaster LA\textup{1 4}YF. email: wajid@mannan.info}

\begin{abstract}
We construct a generalization of  Koszul duality in the sense of Keller--Lef\`evre for not necessarily augmented algebras. This duality is closely related to classical Morita duality and specializes to it in certain cases.
\end{abstract}

\section{Introduction}
Koszul duality is an anti-equivalence between certain subcategories of the derived category of a quadratic Koszul algebra $A$ and that of its Koszul dual $A^!$ \cite{BGS}. More recently Keller and Lef\`evre  \cite{K-L} gave a general formulation which is valid for a general augmented differential graded (dg) algebra  and its dg Koszul dual (which is in that case naturally a dg \emph{coalgebra}). The derived category on the dual side is that of dg \emph{comodules} over this dg coalgebra. In fact, in the coalgebra-comodule language the Koszul equivalence becomes covariant, but taking linear duals restores contravariance. We adopt the set-up in which the Koszul correspondence is contravariant; this allows us to replace coalgebras and comodules with the dual notions of pseudo-compact algebras and pseudo-compact modules, cf. \cite{VdB} concerning these notions. This language is of course equivalent to the language of coalgebras and comodules.

It is natural to ask whether the assumption that $A$ be augmented is essential for constructing Koszul duality. One answer which completely removes this assumption  is provided by the work of Positselski \cite{P}, but at a price: the Koszul dual object op. cit. is no longer a pseudo-compact dg algebra but a more general one, called a (pseudo-compact) \emph{curved} dg algebra.  The category of modules also needs to be appropriately modified.

We provide a different answer, which ensures that (the analogue of) the Koszul dual object is still a pseudo-compact dg algebra, even though the original $A$ is not necessarily augmented. It is however, required to possess  a non-zero finite-dimensional dg module $M$ (in the case $M= \ground$, the ground field, this reduces to the Keller-Lef\`{e}vre treatment). We prove further, that such a dg module always exists, at least if we replace $A$ by a quasi-isomorphic dg algebra. The unexpected conclusion is that the derived category of dg modules over a dg algebra is always equivalent to some derived category of pseudo-compact modules.

Conversely, we give a necessary and sufficient condition for the derived category of pseudo-compact dg modules over a pseudo-compact dg algebra to be anti-equivalent to the derived category of dg modules over some dg algebra. In contrast, this condition does not always hold, i.e.\ there are derived categories of pseudo-compact modules which are not equivalent to any derived module category.

Furthermore, the functor associating to a dg $A$--module an appropriate pseudo-compact module on the Koszul dual side is given as a kind of (derived) Hom into $M$. It is, thus, reminiscent of the (derived version of) the classical Morita duality \cite{Morita, Lam}. It turns out that our duality could indeed be viewed as an extension of Morita duality in the case when $A$ is an ordinary finite-dimensional algebra of \emph{finite global dimension}.

It seems likely that a large portion of our results could be extended to dg modules over dg categories, however we refrained from working in this generality to keep exposition simple. Related results, in the context of dg categories, are contained in the recent preprint \cite{Prid}. The main difference in our approach is that the use of the reduced Hochschild complex allowed us to avoid additional assumptions present in e.g.\ Proposition 3.9 of op. cit.

It also is worth noting that equivalences between categories of pseudo-compact modules (phrased in the language of comodules) were studied in the work of Takeuchi \cite{Tak} and so our results could be viewed as linking (derived versions of) Morita theory and Takeuchi theory.

\subsection{Organization of the paper}In sections 2 through 7 we formulate and prove our main result: a Quillen anti-equivalence between categories of dg modules over a dg algebra $A$, possessing a finite-dimensional non-zero dg module $M$, and the category of pseudo-compact dg modules over a pseudo-compact dg algebra $E$, which is the reduced Hochschild complex of $A$ with coefficients in $\End M$. We note that $E$ computes the derived endomorphisms of $M$ as an $A$--module, just as (for example) the corresponding \emph{unreduced} complex. However we cannot replace $E$ with the corresponding unreduced complex or use some other resolution of $M$ since that will change the derived category of pseudo-compact modules. We do not have a satisfactory explanation of this striking phenomenon; perhaps it is related to the lack of an appropriate closed model category structure on all pseudo-compact dg algebras (or, equivalently, all dg coalgebras).

Our main tool in proving the mentioned  Quillen equivalence is Koszul duality as developed by Positselski \cite{P}. One minor modification that we introduce is the systematic use of (pseudo-compact) dg modules and their Maurer-Cartan twisting which might be of some independent interest. It is worth remarking that the end result does not  involve curved dg algebras and modules even though Positselski's results and our use of them do.

In Section 8 we consider the question of when the category of pseudo-compact dg modules is Quillen equivalent to some (ordinary) derived category of dg modules; a particularly simple criterion is formulated in the case of an ungraded pseudo-compact algebra. Finally Section 9 explains how our results  essentially reduce to (derived) Morita duality in the case when $A$ is  finite-dimensional and of finite global dimension.

\section{Formulation of the main result}
Let $A$ be a dg algebra over a field $\ground$ and let $M$ be a non-zero finite-dimensional  dg module over $A$.  All dg algebras and modules will be cohomologically graded (so the differential will have degree 1).  Recall (cf., for example \cite{P}) that the category of dg $A$--modules has the structure of a closed model category where weak equivalences are quasi-isomorphisms and fibrations are the surjective maps. All $A$--modules are fibrant and cofibrant objects are retracts of cell $A$--modules; the latter are the $A$--modules having a filtration whose associated factors are free $A$--modules. We can form the reduced Hochschild complex of $A$ with coefficients in $\End(M)$:
\[
\Hochb^n(A,\End M)\subset\Hom(A^{\otimes n}, \End M)
\]
consisting of the reduced cochains; that is multilinear maps (over $\ground$) $A^{\times n}\to \End M$ which vanish if any of the arguments is $1\in A$. The complex $\Hochb^\bullet(A,\End M)$ is a dg algebra which we will denote by $E$.

To see that it is a dg algebra first define multiplication of a degree $n$ element $\alpha$ with a degree $m$ element $\beta$ by setting application to elements $a_1,\cdots,a_n,b_1,\cdots,b_m\in A$ to be given by: \begin{eqnarray} \label{mulinE}
(\alpha\beta) (a_1,\cdots,a_n,b_1,\cdots,b_m)=(-1)^{st}\alpha(a_1,\cdots,a_n)\beta(b_1,\cdots,b_m),
\end{eqnarray}
where $s$ is the sum of the degrees of the $a_i$ and $t$ is the degree of $\beta$.  The multiplication on the right hand side of his equation is just composition in End$(M)$.

Note that $E$ has a differential $d$ which is the sum of:

\bigskip

\noindent 1) The internal differentials on the copies of $A$ in $A^{\otimes n}$,

\noindent 2) The differential on End$(M)$ coming from the internal differential on $M$,

\noindent 3) Contractions on the copies of $A$, including the two `end  terms' represented by taking the commutator $[\delta,\_]$ in $E$, where $\delta\colon A \to {\rm End}(M)$ is just the $A$--action on $M$.

\bigskip
\noindent Without these `end terms', the differential of a reduced cochain would no longer be reduced.

Moreover, since $E$ can be identified (disregarding the differential) with $\hat{T}\Sigma( (A/\ground)^*)\otimes \End M$ where here and elsewhere in the paper the tensor product is understood in the appropriate \emph{completed} sense. Since $M$ is finite-dimensional, the dg algebra $E$ is pseudo-compact (or, equivalently, is dual to a dg coalgebra). Here $\Sigma$ denotes the suspension operator, which raises the degree of each homogeneous element in a graded algebra/module.
In the case $M=\ground$, the pseudo-compact dg algebra $E$ is also local (which is the same as saying that its dual dg coalgebra is conilpotent).

We consider the category of left pseudo-compact dg modules $E$-mod (or comodules in the dual setting).

It has the structure of a closed model category in which the weak equivalences
 are essentially filtered quasi-isomorphisms and fibrations are surjections \cite[Theorem 8.2]{P}. All $A$--modules are fibrant and the cofibrant $A$--modules are the retracts of free $A$--modules (disregarding the differential).

Here is our main result.
\begin{theorem}\label{mainres}
The categories $A\modcat$ and $(E\modcat)^{\op}$ are Quillen equivalent. The functor $F\colon A\modcat\to (E\modcat)^{\op}$, effecting this equivalence, associates to a dg $A$--module $N$ the following $E$--module{\rm :}
\[
F(N)=\Hochb(A,\Hom(N,M)),
\]
the reduced Hochschild complex of $A$ with coefficients in $\Hom(N,M)$.
\end{theorem}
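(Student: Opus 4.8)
The plan is to recognise $F$ as a derived $\Hom$ into $M$, namely $F(N)\simeq\RHom_A(N,M)$ as a dg $E$-module, to produce the matching right adjoint $G\simeq\RHom_E(-,M)$, to verify that $(F,G)$ is a Quillen pair, and finally to reduce the equivalence to the reconstruction isomorphism $A\simeq\RHom_E(M,M)$ supplied by Positselski's Koszul duality once $\delta\colon A\to\End M$ is used as a Maurer--Cartan twist.

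First I would check that $F(N)=\Hochb(A,\Hom(N,M))$ computes $\RHom_A(N,M)$: it is the complex $\Hom_\ground(\bar A^{\otimes\bullet},\Hom(N,M))$ obtained by applying $\Hom_A(-,M)$ to the reduced two-sided bar resolution of $N$, and it carries a left action of $E=\Hochb(A,\End M)\simeq\RHom_A(M,M)$ by composition, making it a dg $E$-module. Since $M$ is finite-dimensional this module is pseudo-compact; forgetting differentials it is $\hat{T}\Sigma((A/\ground)^*)\otimes\Hom(N,M)$, and in particular $F(A)\simeq M$, the tautological $(E,A)$-bimodule. Dually I would set $G=\RHom_E(-,M)\colon(E\modcat)^\op\to A\modcat$; the adjunction isomorphism $\Hom_E(P,F(N))\cong\Hom_A(N,G(P))$ then follows from the two-sided Hochschild presentation, with $M$ as the shared bimodule.

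Next I would show that $(F,G)$ is a Quillen pair with $F$ left Quillen. As a functor of $N$, $F$ turns colimits into limits, hence into colimits in $(E\modcat)^\op$, so it is a left adjoint; because $\ground$ is a field, $\Hom(-,M)$ is exact and sends the degreewise-split monomorphisms underlying cofibrations of $A$-modules to surjections, that is to fibrations of $E$-modules, hence to cofibrations of $(E\modcat)^\op$. Preservation of acyclic cofibrations I would get by a filtration argument along the cell filtration, using Positselski's description of weak equivalences as filtered quasi-isomorphisms.

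The crux is that the derived unit $N\to GF(N)$ and counit $FG(P)\to P$ be weak equivalences. Along the cell filtration of $N$ and after reducing $P$ to free $E$-modules, both collapse to the single statement $A\simeq\RHom_E(M,M)$, since $F(A)\simeq M$ and $G(E)\simeq M$. This reconstruction is precisely what Positselski's Koszul duality yields: his comparison identifies the derived category of $A$-modules with the coderived category of comodules over the curved bar-type Koszul dual of $A$, and twisting that curved dual by the Maurer--Cartan element $\delta$ produces exactly the uncurved pseudo-compact dg algebra $E=\Hochb(A,\End M)$, the twist inducing an equivalence matching $F$. The main obstacle --- and the structural reason the \emph{reduced} Hochschild complex cannot be replaced by the unreduced one or by any other resolution of $M$ --- is to carry out this twist while fixing the pseudo-compact (equivalently conilpotent-coalgebra) structure on the nose: only the reduced complex gives the correct coaugmentation, so one must follow the locality and conilpotency data through the $\delta$-twist, confirm that the curvature cancels so that one lands among genuinely uncurved pseudo-compact dg $E$-modules, and then check that the unit is a filtered quasi-isomorphism by reducing to $N=A$, where it is the reconstruction isomorphism above.
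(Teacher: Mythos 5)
Your strategy (a Keller-style adjunction $F\dashv G$ plus compact generation, with everything reduced to the reconstruction statement $A\simeq\RHom_E(M,M)$) is not the paper's, and as written it has a genuine gap at exactly the point where all the weight rests. You justify the reconstruction by claiming that ``twisting that curved dual by the Maurer--Cartan element $\delta$ produces exactly the uncurved pseudo-compact dg algebra $E=\Hochb(A,\End M)$.'' This is false as stated: a twist never changes the underlying graded space (Definition 4.1 of the paper), and the curved Koszul dual $\overline{B}A=\hat{T}\Sigma((A/\ground)^*)$ is strictly smaller than $E\cong\hat{T}\Sigma((A/\ground)^*)\otimes\End M$ unless $M=\ground$; moreover $\delta$ is not an element of $\overline{B}A$ at all, but of $\overline{B}A\otimes\End M$. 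The missing ingredient is the ``matrix amplification'' step: one must first pass from $\overline{B}A$ to the curved pseudo-compact algebra $\overline{B}A\otimes\End M$ and prove that $N\mapsto N\otimes M$ and $L\mapsto\Hom_{\End M}(M,L)$ form a Quillen equivalence between their module categories --- this is Proposition \ref{usual101} of the paper, proved by a Morita-type computation using that $M$ is a projective generator over $\End M$. Only inside $\overline{B}A\otimes\End M$ does the canonical element $\xi=\sum e^i\otimes\delta(e_i)$ satisfy the MC equation, and only then does the twisting isomorphism of Proposition \ref{obprop} identify its modules with $E$-modules. Without this step, your crucial statement $A\simeq\RHom_E(M,M)$ has no proof, and the appeal to Positselski does not supply one.

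The surrounding scaffolding is also shakier than it looks, because weak equivalences of pseudo-compact $E$-modules are \emph{not} quasi-isomorphisms. Consequently, properties of $F$ recorded only up to quasi-isomorphism --- ``$F(N)\simeq\RHom_A(N,M)$'', ``$F(A)\simeq M$'' --- can never suffice on their own: the unreduced Hochschild complex enjoys all the same quasi-isomorphism-level properties, yet the remark following Theorem \ref{mainres} shows it produces a non-equivalent category of pseudo-compact modules. Your reductions ``along the cell filtration'' and ``to free $E$-modules,'' the preservation of acyclic cofibrations, and the implicit claim that $M$ compactly generates $D(E)^{\op}$ (which requires identifying the simple $E$-modules, cf.\ Lemma \ref{cogeneration} and the remark after Theorem \ref{BtoA}) are precisely the delicate points in the second-kind setting, and each is asserted rather than proved. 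A Keller-style argument of the shape you propose does appear in the paper, but only for Theorem \ref{BtoA}, where $A$ is \emph{defined} as $\End_B(N)$ of a cofibrant compact generator, so reconstruction holds by construction. For Theorem \ref{mainres} the paper avoids the unit/counit analysis entirely: it composes three equivalences --- Positselski's Koszul duality $A\modcat\leftrightarrow(\overline{B}A\modcat)^{\op}$, the Morita step, and the twist isomorphism --- and observes that the composite functor is $F$. Once you have supplied the two ingredients you are missing, you should do the same; your adjunction/compact-generation framework then becomes unnecessary.
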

\begin{rem}
\begin{enumerate}
\item
We use (\ref{mulinE}) to define a left $E$--action on $F(N)$, with multiplication on the right hand side of (\ref{mulinE}) now coming from the  $\End M$--action on Hom$(N,M)$.
\item
The complex $\Hochb(A,\End(M))$ is quasi-isomorphic to the \emph{unreduced} Hochschild complex $\Hoch(A,\End(M))$. However these pseudo-compact dg algebras do \emph{not} have equivalent derived categories of pseudo-compact modules. For example, taking $A=\ground$ and $M=\ground$ we have $\Hoch(A,\ground)\cong\Hochb(A\times\ground,\ground)$; thus the category of pseudo-compact dg $\Hoch(A,\ground)$--modules is Quillen equivalent to the category of dg $(\ground\times\ground)$--modules. On the other hand, $\Hochb(A,\ground)\cong\ground$. Thus, the derived categories of pseudo-compact $\Hochb(A,\ground)$--modules and of pseudo-compact $\Hoch(A,\ground)$--modules cannot be equivalent and the reduced Hochschild complex cannot be replaced with the unreduced one in the definition of the pseudo-compact algebra $E$.
\item It is possible to describe the adjoint functor $G\colon(E\modcat)^{\op}\to A\modcat$ explicitly (and this will be done later). However for this we need to develop the language of twistings and it will be in less traditional terms then the functor $F$.
\end{enumerate}
\end{rem}
If the dg $A$-module $M$ is the ground field $\ground$ (so that $A$ is augmented) then Theorem \ref{mainres} is the ordinary dg Koszul duality. However we stress that $M$ could be an arbitrary non-zero finite-dimensional dg $A$-module; in particular it could be \emph{acyclic}. This leads to the following result.
\begin{cor}\label{modcomod}
For any dg algebra $A$ the category $A$-mod is Quillen equivalent to the category $(E\modcat)^{\op}$ for some pseudo-compact dg algebra $E$.
\end{cor}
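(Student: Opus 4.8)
The plan is to deduce the corollary directly from Theorem \ref{mainres}, whose sole hypothesis on $A$ is the existence of a non-zero finite-dimensional dg module. Since an arbitrary $A$ need not possess one (for instance it may fail to be augmented and have no finite-dimensional quotients), the first task is to replace $A$ by a quasi-isomorphic dg algebra that does. I would then invoke the standard fact that a quasi-isomorphism of dg algebras over a field induces a Quillen equivalence of the associated module categories, via restriction and extension of scalars, so that passing to a quasi-isomorphic model leaves the derived category on the $A$-side unchanged.

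To manufacture the required module I would take a contractible finite-dimensional complex, say $C = (\ground \xrightarrow{\id} \ground)$ concentrated in degrees $0$ and $1$, and set $A' = A \times \End(C)$. The endomorphism dg algebra of a contractible complex is acyclic: a contracting homotopy $h$ for $C$ satisfies $d_{\End C}(h) = d_C h + h d_C = \id_C = 1_{\End C}$, so the unit of $\End(C)$ is a coboundary, which forces $H(\End C)=0$. Consequently the projection $A' \to A$ onto the first factor is a quasi-isomorphism of dg algebras, and $A\modcat$ and $A'\modcat$ are Quillen equivalent by the previous paragraph. At the same time, $C$ is tautologically a finite-dimensional dg module over $\End(C)$, and pulling back along the \emph{second} projection $A' \to \End(C)$ makes $C$ into a non-zero finite-dimensional dg module over $A'$, on which the $A$-factor acts by zero.

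With $A'$ and $M = C$ in hand, Theorem \ref{mainres} supplies a Quillen equivalence between $A'\modcat$ and $(E\modcat)^{\op}$, where $E = \Hochb(A', \End C)$ is a pseudo-compact dg algebra. Composing this with the Quillen equivalence $A\modcat \simeq A'\modcat$ yields the asserted equivalence $A\modcat \simeq (E\modcat)^{\op}$.

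The conceptual heart of the argument is precisely the production of a finite-dimensional module, which the product construction $A' = A \times \End(C)$ provides; everything else (invariance of the derived category under quasi-isomorphism, and the acyclicity of $\End C$) is routine. I therefore expect the only real work to lie in choosing the auxiliary algebra $A'$ correctly, after which the corollary follows formally from the main theorem.
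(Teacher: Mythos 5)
Your proof is correct and follows essentially the same route as the paper: replace $A$ by the product with a finite-dimensional acyclic dg algebra, note the projection to $A$ is a quasi-isomorphism inducing a Quillen equivalence, obtain a non-zero finite-dimensional module via the other projection, and apply Theorem \ref{mainres}. The only (immaterial) difference is your choice of acyclic factor: you use the four-dimensional $\End(C)$ with module $C$, whereas the paper uses the minimal two-dimensional acyclic algebra $\ground\{1,x\}$ with $x^2=0$, $d(x)=1$, acting on itself.
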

\begin{proof}
Let $C$ be any finite-dimensional acyclic dg algebra; the smallest example is a two-dimensional one having basis $\{1,x\}$ with $x^2=0$ and $d(x)=1$. Then the projection $A\times C\to A$ is clearly a quasi-isomorphism and leads to a Quillen equivalence between the categories of dg $A$-modules and $A\times C$-modules. But the dg algebra $A\times C$ has a non-zero finite dimensional module, namely $C$ via the projection $A\times C\to C$. We conclude, by Theorem \ref{mainres}, that the category $A\times C\modcat$ and thus, the category $A$-mod, is Quillen equivalent to the category $(E\modcat)^{\op}$ where $E=\overline{\Hoch}(A\times C, \End C)$.
\end{proof}
\section{Curved algebras and modules}
Recall from \cite{P} that a curved dg algebra is a graded algebra $A$ together with a `differential' $d\colon A\to A$, a derivation of $A$ of degree one such that $d^2(a)=[h,a]$ where $a\in A$ and $h\in A$ is an element of degree two such that $d(h)=0$, called the \emph{curvature} of $A$.

A morphism of curved dg algebras
$f\colon  B\to A$ is a pair $(f,a)$ consisting of a morphism of graded algebras $f\colon  B\to A$
and an element $a\in A^1$ satisfying the equations:
\begin{eqnarray*}f(d_B(x)) = d_A(f(x)) + [a, f(x)],\\
f(h_B) = h_A + d_A(a) + a^2,\end{eqnarray*}
for all $x\in B$, where $B = (B, d_B, h_B)$ and $A = (A, d_A, h_A)$.

The composition of
morphisms is defined by the rule $(f, a) \circ (g, b) = (f \circ g, a+f(b))$. Identity morphisms
are the morphisms $(id, 0)$.

A morphism $(f,0)$ as above is called strict; thus strict morphisms preserve curvature elements.

Note that isomorphisms of curved dg algebras are precisely pairs $(f,a)$ where $f$ is a linear isomorphism since the inverse map is then given by the pair $(f^{-1}, -f^{-1}(a))$. Any dg algebra can be viewed as a curved dg algebra with the zero curvature. There are non-isomorphic dg algebras which are isomorphic as curved dg algebras.

A left dg module $(M, d_M)$ over a curved dg algebra $A$ is a graded left $A$--module M  endowed with a derivation $d_M \colon  M\to M$ compatible with the derivation $d_A$ and such that
$d^2_M(x) = h_Ax$ for any $x\in M$.  Equivalently, a graded $\ground$--vector space $M$ with degree 1 derivation  $d_M \colon  M\to M$ is a left $A$--module if it is equipped with a strict map $A\to \End M$ .  Here the `differential' on $\End M$ is taking the commutator with $d_M$ and the curvature is $d_M^2$.
\begin{rem}
A curved dg algebra $A$ is \emph{not} necessarily a left (or right) module over itself. Indeed if it were, then $d_A^2=[h,-]=l_h$, where $l_h$ is the operator of left multiplication by $h$; clearly this
holds if and only if $h=0$.
 On the other hand, it is consistent to consider $A$ as an $A$-bimodule, i.e.\ a module over the curved dg algebra $A\otimes A^{\op}$ with the curvature element $h\otimes 1-1\otimes h$.
\end{rem}
We will be interested in the pseudo-compact versions of the above notions, i.e.\ pseudo-compact curved dg algebras and modules over them.
\begin{example}
Let $A$ be a \emph{unital} dg algebra.
Choose a linear map  $\epsilon\colon A\to\ground$, to be regarded as an `augmentation', even though not required to
be multiplicative nor differential.
Then the pseudo-compact graded algebra $\hat{T}\Sigma( (A/\ground)^*)$ has the structure of a curved pseudo-compact algebra. To define it view $\hat{T}\Sigma( (A/\ground)^*)$ as a `Hochschild complex' of $A$ with coefficients in $\ground$. In other words, define the `differential' on $\hat{T}\Sigma( (A/\ground)^*)$ by the formula{\rm :}
\begin{eqnarray}\label{hoc}
df(a_1,\ldots, a_n)&=& \epsilon(a_1)f(a_2,\ldots, a_n)\\\nonumber&+&\sum_{k=1}^n(-1)^{k} f(a_1,\ldots, a_ka_{k+1},\ldots, a_n)\\\nonumber&+&(-1)^{n+1}f(a_1,\ldots, a_{n-1})\epsilon (a_n).
\end{eqnarray}
 (here for simplicity it is assumed that all elements and $f$ are of even degree).

Since $\epsilon$ is not necessarily an augmentation, $d$ may not square to zero; however this will define a curved dg algebra structure on $\hat{T}\Sigma( (A/\ground)^*)$. If $\epsilon$ is chosen to be a dg map, the curvature element is the `homutator' of $\epsilon${\rm;} $h\in \hat{T}^2\Sigma( (A/\ground)^*)${\rm:} $h(a,b)=\epsilon(ab)-\epsilon(a)\epsilon(b)$. It vanishes iff $\epsilon$ is a genuine augmentation in which case $\hat{T}\Sigma( (A/\ground)^*)$ becomes uncurved and is isomorphic to $\Hochb(A,\ground)$.

If $A$ is acyclic then one cannot choose $\epsilon$ to be a dg map.  In this case the curvature element has an additional term - the differentiator $h_d\in \Sigma (A/\ground)^*)$, satisfying $h_d(a)=\epsilon(da)$.
\end{example}

The last example is the main reason (for us) to consider curved dg algebras. For any unital dg algebra $A$ it determines a (local) pseudo-compact algebra $\hat{T}\Sigma( (A/\ground)^*)$. The correspondence $A\mapsto\hat{T}\Sigma( (A/\ground)^*)$ depends on choosing a fake augmentation but any two choices are canonically isomorphic and thus, it could be viewed as a functor from dg algebras into local curved pseudo-compact dg algebras.  Positselski shows that this gives a Quillen anti-equivalence between dg algebras and local pseudo-compact curved dg algebras \cite{P}. This is a unital analogue of Keller-Lefevre's correspondence. There is also a Quillen anti-equivalence between $A$--modules and $\hat{T}\Sigma( (A/\ground)^*)$--modules which will be instrumental in establishing our main result.

\section{Twisting}
The notion of Maurer-Cartan (MC) twisting of dg algebras or dg Lie algebras is well-documented \cite{CL}. Here we will discuss twistings of \emph{curved} algebras and modules over them. Note that Positselski (and others) work with twisted cochains rather than with MC elements; our point of view is essentially equivalent but more convenient since it allows one to avoid coalgebras and comodules.
\begin{defi}\begin{enumerate}
\item
Let $A=(A,d,h)$ be a curved dg algebra and $\xi\in A^1$. The twisting of $A$ by $\xi$, denoted by $A^\xi$, is the curved dg algebra having the same underlying space as $A$, the twisted differential $d^\xi=d+[\xi,\_]$ and the twisted curvature $h^\xi=h+d\xi+\frac{1}{2}[\xi,\xi]$.
\item
Let $M,d_M$ be a module over a curved dg algebra $A$ as above. The twisting of $M$ by $\xi$, denoted by $M^{[\xi]}$, is the module over $A^\xi$ having the same underlying space as $A$ and the twisted differential
$d^{[\xi]}:=d_M+\xi$.
\end{enumerate}
\end{defi}
\begin{rem}
An uncurved dg algebra $A$ may be viewed as a module over itself, so given $\xi\in A^1$ we have that $A^{[\xi]}$ is a module over $A^\xi${\rm;} here $A^{[\xi]} \neq A^{\xi}$ as they have different differentials.
\end{rem}
\begin{defi}
Let $(A,d,h)$ be a curved dg algebra; then $\xi\in A^1$ is called MC if $h+d\xi+\frac{1}{2}[\xi,\xi]=0$.
\end{defi}
\begin{rem}
The twisting of a curved dg algebra by an MC element is an \emph{uncurved} algebra.
\end{rem}
\begin{example}\label{bigexample}\begin{enumerate}
\item
Let $A$ be a unital dg algebra, then $BA:=\hat{T}\Sigma(A^*)$ is an (uncurved) acyclic local pseudo-compact algebra. Tensoring it with $A$ we get another dg algebra (although not pseudo-compact unless $A$ is finite dimensional){\rm:} $C=BA\otimes A$. The algebra $C$ has a canonical MC element{\rm;} choosing a basis $\{e_i\}$ in $A$ and the dual basis $\{e^i\}$ in $\Sigma( A^*)$ it is{\rm:} $$\xi:=\sum e^i\otimes e_i\in BA\otimes A.$$ Then $(BA\otimes A)^\xi$ is isomorphic to $\Hoch(A,A)$, the (unreduced) Hochschild complex of $A$ with coefficients in itself. If $C$ is a (unital) dg algebra supplied with a dg algebra map $A\to C$ then $\Hoch(A,C)$ can similarly be constructed as an algebra twisting of $BA\otimes C$.

Furthermore, let $M$ be an $A$-bimodule. Then viewing $BA\otimes M$ as a bimodule over $BA\otimes A$ we can form the twisted module $(BA\otimes M)^{[\xi\otimes 1 +1\otimes \xi]}$.  Here $\xi\otimes 1+1\otimes \xi \in (BA\otimes A)\otimes (BA\otimes A)^{\op}$.  This results in what normally is denoted by $\Hoch(A,M)$, the unreduced Hochschild complex of $A$ with coefficients in $M${\rm;} it is thus naturally a bimodule over $\Hoch(A,A)$.
\item
Let $A$ be as above and consider a fake augmentation $\xi\colon A\to\ground$ as an element in $BA$. Twisting  by $\xi$ gives a local curved pseudo-compact algebra $BA^\xi$ with curvature denoted by $w$.

The reduced bar-construction $\overline{B}A:=\hat{T}\Sigma( (A/\ground)^*)$ is a (curved) subalgebra in $BA^\xi${\rm;} in fact there is an isomorphism $BA^\xi\cong \overline{B}A\langle\langle x\rangle\rangle$ mapping $\xi \mapsto x$,  where $dx=x^2+w$.

In the case when $\xi$ is a genuine augmentation we have $w=0$ and the inclusion $\overline{B}A\hookrightarrow BA$ is a quasi-isomorphism as $BA$ is the coproduct of  $\overline{B}A$ with an acyclic dg algebra.
\item Let again $A$ be a unital dg algebra and consider the curved pseudo-compact algebra
$\overline{B}A=\hat{T}\Sigma( (A/\ground)^*)$. Let $A_+$ be the kernel of the (fake) augmentation
$\epsilon\colon A\to \ground$.  Then $(A/k)^*$ can be identified with $A_+^*$ and
$\overline{B}A$ with $\hat{T}\Sigma( A_+^*)$. Thus, consider the curved dg algebra
$\hat{T}\Sigma (A_+^*)\otimes A$. It has a canonical element $\xi=\sum e^i\otimes e_i$
where $e_i$ is a basis in $A_+$ and $e^i$ is the dual basis in $\Sigma (A_+^*)$.
It turns out to be an MC element so twisting by it results in an \emph{uncurved} dg algebra $(\hat{T}\Sigma( A_+^*)\otimes A)^\xi$. The latter dg algebra is $\Hochb(A,A)$, the reduced Hochschild complex of $A$ with coefficients in itself. If $C$ is a (unital) dg algebra supplied with a dg algebra map $A\to C$ then $\Hochb(A,C)$ can similarly be constructed as an algebra twisting of $\overline{B}A\otimes C$.

If $M$ is an $A$-bimodule one can similarly form $\Hochb(A,M)$ as a bimodule over $\Hochb(A,A)$.
\item
Let $M$ be a \emph{left} dg module over a dg algebra $A$ and consider $\Hoch(A,A)=(BA\otimes A)^\xi$ as above and its left module $(BA\otimes M)^{[\xi]}$.  This module looks similar to $\Hoch(A,M)$ but the differential is slightly different: one has to omit the last term in (\ref{hoc}). The complex $(BA\otimes M)^{[\xi]}$ is acyclic and is the dual of the standard bar-resolution of $M^*$ as a right $A$--module.

Similarly we can form $\Hochb(A,A)=(\overline{B}A\otimes A)^\xi$ and its left dg module $(\overline{B}A\otimes M)^{[\xi]}$, which can be identified with the dual \emph{reduced} standard bar-resolution $M^*$ as a right $A$--module.
\end{enumerate}
\end{example}
\begin{prop}\label{obprop}
Let $A$ be a curved pseudo-compact algebra and $\xi\in A^1$. Then the category of $A$--modules and the category of $A^\xi$--modules are Quillen equivalent. To an $A$--module $N$ we associate an $A^\xi$--module $M^{[\xi]}$ and to an $A^\xi$--module $N$ we associate the module $N^{[-\xi]}$.
\end{prop}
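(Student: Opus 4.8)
The plan is to realise the two twisting operations $N\mapsto N^{[\xi]}$ and $N\mapsto N^{[-\xi]}$ as mutually inverse functors that leave the underlying (filtered) graded modules and all morphisms unchanged, and to deduce the Quillen equivalence almost formally once this underlying equivalence of categories is in place, together with compatibility with fibrations and weak equivalences.

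First I would check that $N\mapsto N^{[\xi]}$ is a well-defined functor $A\modcat\to A^\xi\modcat$. On morphisms it is the identity, so the only substantive point is that the twisted differential $d^{[\xi]}=d_N+\xi$ (where $\xi$ now denotes the action of $\xi\in A^1$ on $N$) squares to the action of the twisted curvature $h^\xi$. Expanding $(d_N+\xi)^2=d_N^2+(d_N\circ\xi+\xi\circ d_N)+\xi\circ\xi$ and using that $d_N$ is a derivation over $d_A$ with $d_N^2=h\cdot$, one gets $d_N\circ\xi+\xi\circ d_N=(d\xi)\cdot$ and $\xi\circ\xi=\tfrac12[\xi,\xi]\cdot$, whence $(d^{[\xi]})^2=\bigl(h+d\xi+\tfrac12[\xi,\xi]\bigr)\cdot=h^\xi\cdot$, exactly as required. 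A parallel, shorter computation shows that $d^{[\xi]}$ is a derivation over the twisted algebra differential $d^\xi=d+[\xi,-]$, so $N^{[\xi]}$ really is an $A^\xi$-module.

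Next I would verify that the two functors are mutually inverse. Since twisting changes only the differential, and additively, the module $(N^{[\xi]})^{[-\xi]}$ carries the differential $(d_N+\xi)+(-\xi)=d_N$; combined with the identity $(A^\xi)^{-\xi}=A$ (read off directly from the formulas for $d^\xi$ and $h^\xi$) this shows $(N^{[\xi]})^{[-\xi]}=N$ on the nose, and symmetrically in the other order. Hence $(-)^{[\xi]}$ and $(-)^{[-\xi]}$ are strictly inverse equivalences of the underlying abelian categories; in particular each is simultaneously left and right adjoint to the other.

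Finally I would promote this to a Quillen equivalence. For an adjoint equivalence it suffices to check that both functors preserve fibrations and weak equivalences. Preservation of fibrations is immediate, since fibrations are surjections and the functors are the identity on underlying maps. The one delicate point --- and the step I expect to be the main obstacle --- is preservation of weak equivalences, i.e.\ of filtered quasi-isomorphisms. Here I would exploit that twisting perturbs the differential only by the operator $\xi\cdot$, which is compatible with the defining filtration of the pseudo-compact modules and therefore leaves unchanged the associated graded complex through which filtered quasi-isomorphisms are detected; thus a filtered quasi-isomorphism stays one after twisting. With both functors preserving fibrations and weak equivalences and being inverse equivalences, the pair $\bigl((-)^{[\xi]},(-)^{[-\xi]}\bigr)$ is a Quillen equivalence, while all the algebraic identities used above are routine sign bookkeeping.
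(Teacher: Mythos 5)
Your computations in the first two paragraphs are correct and agree with the paper's proof: the twist is well defined (the expansion of $(d_N+\xi)^2$ showing it acts by $h^\xi$ is exactly right), and the two twisting functors are strictly inverse isomorphisms of categories that are the identity on morphisms. The genuine gap is in your last step, where you claim that the weak equivalences are ``filtered quasi-isomorphisms'' detected on ``the associated graded complex'' of ``the defining filtration of the pseudo-compact modules,'' and that twisting leaves this associated graded unchanged. No such description of the weak equivalences is available at the level of generality of the proposition. The algebra $A$ here is an arbitrary \emph{curved} pseudo-compact algebra: it is not assumed local (conilpotent), so there is no coradical-type filtration to appeal to; and when the curvature of $A$ or of $A^\xi$ is non-zero, modules satisfy $d_M^2=h\cdot(-)$ rather than $d_M^2=0$, so they are not complexes and no notion of quasi-isomorphism of the modules (filtered or otherwise) exists on one or both sides. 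In Positselski's model structure the weak equivalences are by definition the morphisms whose cone is coacyclic, i.e.\ lies in the minimal triangulated subcategory generated by totalizations of short exact sequences of modules and closed under the appropriate infinite (co)products; the paper's phrase ``essentially filtered quasi-isomorphisms'' is an informal gloss on this, not a characterization you can compute with here. As written, your argument verifies preservation of a class of maps that is not (known to be) the class of weak equivalences, so the Quillen equivalence does not follow.

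The paper sidesteps weak equivalences entirely: it notes that in both categories the cofibrations are the injections with cokernel projective as a graded module (disregarding the differential) and the fibrations are the surjections --- both conditions on underlying graded objects, which the twist does not alter; since mutually inverse functors preserving cofibrations and fibrations in both directions carry one model structure onto the other (the weak equivalences being determined by the cofibrations and fibrations together), the isomorphism of categories is a Quillen equivalence. Alternatively, your route can be repaired without any filtration: the twist is an isomorphism of categories, identical on morphisms, commuting with shifts, cones, short exact sequences, totalizations and (co)products, hence it takes coacyclic objects to coacyclic objects and therefore preserves weak equivalences as actually defined; combined with your (correct) preservation of fibrations, this yields the Quillen equivalence.
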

\begin{proof}
Clearly the categories are isomorphic: the $A$--module $(M^{[\xi]})^{[-\xi]}$ is equal to $M$ and similarly with an $A^\xi$--module $N$.

As both the categories are of pseudo-compact modules, cofibrations are characterized as the injective morphisms with projective cokernels (disregarding differentials).  Clearly this property is preserved by both the functors described in the proposition.
\end{proof}

\section{Koszul duality}
Let $A$ be a unital dg algebra and $B:=\overline{B}A$ be its reduced bar-construction, a curved local pseudo-compact algebra.

\begin{defi}
We define a pair of (contravariant) functors between the categories of $A$--modules and of $B$--modules as follows.  The functor $F$ associates to an $A$--module $N$, the twisted module{\rm:} $$F(N)=(B\otimes N^*)^{[\xi]}.$$  Here we view $B\otimes N^*$ as a $B\otimes B^{\op}\otimes A^{\op}$--module and $\xi$ is the canonical element in $B^{\op}\otimes A^{\op}$. Thus, $(B\otimes N^*)^{[\xi]}$ becomes a $B\otimes(B^{\op}\otimes A^{\op})^{\xi}=B\otimes\Hochb(A,A)^{\op}$--module and forgetting the $\Hochb(A,A)^{\op}$--action we get a $B$--module.

The functor $G$ in the opposite direction associates to a $B$--module $L$ the twisted module{\rm:} $$G(L)=(A\otimes L^*)^{[\xi]}.$$ Here we view $A\otimes L^*$ as a $ B^{\op}\otimes A^{\op}\otimes A=\overline{B}A^{\op}\otimes A^{\op}\otimes A$--module; $\xi$ is the canonical element in $\overline{B}A^{\op}\otimes A^{\op}$. Thus, $(A\otimes L^*)^{[\xi]}$ is a $(\overline{B}A^{\op}\otimes A^{\op})^\xi\otimes A= \Hochb(A,A)^{\op}\otimes A$--module and forgetting the $\Hochb(A,A)^{\op}$--action we get an $A$--module.
\end{defi}

Explicitly then, if $\alpha\in F(N)$ is a map $A^{\times k} \times N \to \ground$, then $d\alpha$ maps:
\begin{eqnarray*}
(a_1,\cdots,a_{k+1},n) \mapsto &\,\,&\epsilon(a_1)\alpha (a_2,\cdots,a_{k+1},n) \\&-&\alpha(a_1a_2,\cdots,a_{k+1},n)\\&+&\cdots\\&+&(-1)^{k+1}\alpha(a_1,\cdots,a_{k+1},n),\end{eqnarray*}
where as before $\epsilon$ denotes a fake augmentation and the elements $a_i$, $i=1,\ldots, k+1$ are understood to be even, as before.  It is clear that $d^2$ induces application of the curvature element in $B$ rather than commutation with it, so $F(N)$ is indeed a $B$--module.

From \cite[Theorem 6.3a]{P} we know that $F,G$ form a mutually inverse pair of equivalences of categories.  This equivalence is then a Quillen equivalence with respect to the closed model category structure given by \cite[Theorem 9.3]{P}.  Thus we have:
\begin{theorem}
The functors $F,G$ form a (contravariant) Quillen equivalence between the categories of dg $A$--modules and dg $B$--modules.
\end{theorem}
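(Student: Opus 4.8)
The plan is to factor the statement into two essentially independent assertions: that $F$ and $G$ are mutually inverse equivalences of the underlying categories, and that they are compatible with the two closed model structures in the way required for a contravariant Quillen equivalence. The guiding principle is that $F$ and $G$ are nothing but the \emph{pseudo-compact incarnations} of Positselski's Koszul duality functors: passing from an $A$--module $N$ to its linear dual $N^*$ converts a discrete module into a pseudo-compact one, and this dualization is precisely what turns Positselski's covariant comodule-valued correspondence into the contravariant pseudo-compact correspondence we are after.

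First I would make the identification of $F$ and $G$ with the known Koszul duality functors precise. Using the twisting constructions of the previous section together with the computations of Example \ref{bigexample}, one checks that $(B\otimes N^*)^{[\xi]}$, with $\xi$ the canonical element of $B^{\op}\otimes A^{\op}$, is exactly the twist of the $B\otimes B^{\op}\otimes A^{\op}$--module $B\otimes N^*$ along the Maurer--Cartan element defining $\Hochb(A,A)^{\op}$; forgetting the residual $\Hochb(A,A)^{\op}$--action then yields a genuine $B$--module. The explicit differential displayed just before the theorem confirms this, since $d^2$ reproduces the curvature of $B$ rather than commutation with it, so the module axiom over the curved pseudo-compact algebra $B$ holds. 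The same bookkeeping applies to $G$. With these identifications in hand, the assertion that $F$ and $G$ are mutually inverse equivalences of the underlying categories is exactly \cite[Theorem 6.3a]{P}, read through the linear duality $(-)^*$.

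Next I would upgrade this equivalence of underlying categories to a Quillen equivalence. I would equip $A\modcat$ with the projective model structure recalled above and $B\modcat$ with the structure of \cite[Theorem 9.3]{P}, and regard $F$ as a covariant functor $A\modcat\to (B\modcat)^{\op}$. Since $F$ and $G$ are already mutually inverse equivalences of the underlying categories, the adjunction is automatic, and the remaining point is that it is a Quillen adjunction: that $F$ sends cofibrations and trivial cofibrations of $A$--modules to fibrations and trivial fibrations of $B$--modules. For the part concerning (co)fibrations I would invoke Proposition \ref{obprop}, by which twisting preserves the class of cofibrations of pseudo-compact modules (injections with projective cokernel), together with the exactness of linear duality and of completed tensoring with $B$, which lets one track surjectivity through the construction of $F(N)$.

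The hard part will be matching the two notions of weak equivalence across the duality. On the $A$--side the weak equivalences are ordinary quasi-isomorphisms, whereas on the pseudo-compact $B$--side they are the essentially filtered quasi-isomorphisms of \cite[Theorem 9.3]{P}, i.e.\ weak equivalences of the second kind; the assertion that $F$ and $G$ interchange precisely these two classes is the real content. In our setting the subtlety is to carry Positselski's coderived/contraderived comparison through the dualization $(-)^*$, so that the filtration governing weak equivalences of comodules becomes the correct filtration on pseudo-compact modules. Once this is verified, the (already exact) functors $F$ and $G$ descend to inverse equivalences of homotopy categories, and the contravariant Quillen equivalence follows.
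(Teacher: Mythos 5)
Your overall strategy coincides with the paper's: the paper, too, reduces the theorem to exactly two citations, Positselski's Theorem 6.3a for the equivalence and his Theorem 9.3 for the model structure on pseudo-compact $B$--modules. However, there is a genuine error in where you locate the content of the first citation, and it leaves your argument with a hole. You claim that $F$ and $G$ are mutually inverse equivalences of the \emph{underlying} module categories, and you build on this twice (``the adjunction is automatic''). This claim is false: unwinding the two dualizations, $GF(N)$ has underlying graded space $A\otimes\bigl(\bigoplus_{n\ge 0}(\Sigma(A/\ground))^{\otimes n}\bigr)\otimes N$ --- it is the two-sided reduced bar resolution of $N$, which maps quasi-isomorphically onto $N$ but is certainly not isomorphic to it. What Positselski's Theorem 6.3a actually asserts is that $F$ and $G$ induce mutually inverse equivalences between $D(A)$ and the derived category of the second kind of pseudo-compact $B$--modules, i.e.\ between the homotopy categories of the two model structures in play; the paper's phrase ``mutually inverse pair of equivalences of categories'' has to be read at this derived level.

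The consequence of this misplacement is that the step you defer as ``the hard part'' --- that the functors are compatible with quasi-isomorphisms on one side and weak equivalences of the second kind on the other, and descend to mutually inverse equivalences of homotopy categories --- is precisely the statement that Theorem 6.3a supplies, and your proposal never proves it; proving it directly, as you propose, would amount to reproving Positselski's Koszul duality theorem. The correct assembly is as follows. (i) $F$ and $G$ are contravariantly adjoint; this comes from their construction as Positselski's Koszul duality functors transported through linear duality, not from any equivalence of categories. (ii) $F$ carries injections of $A$--modules to surjections of pseudo-compact $B$--modules, and $G$ carries injections of pseudo-compact $B$--modules to surjections of $A$--modules, because linear duality and the completed tensor product are exact; hence $F$ preserves cofibrations and $G$ preserves fibrations. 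Since in any closed model category the trivial cofibrations and the fibrations are characterized by lifting properties against each other, this alone already makes $(F,G)$ a Quillen adjunction --- no discussion of weak equivalences is needed at this stage, and in particular your plan to verify preservation of \emph{trivial} cofibrations directly is an unnecessary detour. (iii) Theorem 6.3a then identifies the derived functors as mutually inverse equivalences, which is exactly the assertion that this Quillen adjunction is a Quillen equivalence. With the citation aimed at step (iii) rather than at a false strict statement, your argument closes; as written, its central claim is unestablished.
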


\begin{rem}\begin{enumerate}
\item
The action of $\Hochb(A,A)$ was not mentioned in \cite{P} and the functors $F$ and $G$ were described in a different, although equivalent, language. The role of the additional right action of $\Hochb(A,A)$ is not completely clear.  One can speculate that since $\Hochb(A,A)$ is quasi-isomorphic to a strictly commutative dg algebra, its left and right actions coincide in some strong homotopy sense and moreover, are formal consequences (again in an appropriate strong homotopy sense) of the actions of $A$ or $B$.
\item
The action of the curved dg algebra $B$ on $F(N)$ is obtained as a restriction using the inclusion of curved dg algebras $B\hookrightarrow B\otimes\Hochb(A,A)\colon b\mapsto b\otimes 1$. Note that this required the twist: the corresponding map $B\to B\otimes B^{\op}\otimes A^{\op}$ is \emph{not} a map of curved dg algebras and $B\otimes N^*$ is \emph{not} a $B$--module, just as $B$ is not a $B$--module.
\end{enumerate}
\end{rem}

\section{Covariant Morita equivalence for modules over a pseudo-compact algebra}
Let $B$ be a pseudo-compact curved dg algebra and $M$ be a non-zero finite-dimensional vector space over $\ground$. Then $E':=B\otimes \End M$ is a curved pseudo-compact algebra. We have:
\begin{prop}\label{usual101}
The categories of $B$--modules and of $E'$--modules are Quillen equivalent. To a $B$--module $N$ we associate the $E'$--module $F^\prime(N):=N\otimes M$. To an $E'$--module $L$ we associate a $B$--module $G^\prime(L):=
\Hom_E(B\otimes M, L)\cong \Hom_{\End M}(M, L)$.
\end{prop}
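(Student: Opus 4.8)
The plan is to recognize this as the classical Morita equivalence between $\ground$ and $\End M\cong\operatorname{Mat}_n(\ground)$, where $n=\dim_\ground M$, base-changed along the unit $\ground\to B$, and then to verify that the dg, curved and pseudo-compact structure is respected. The functor $F'=(-)\otimes M$ is transparent: if $N$ is a $B$--module then $N\otimes M$ carries the tensor of the $B$--action on $N$ with the tautological $\End M$--action on $M$, making it an $E'=B\otimes\End M$--module; since $M$ is finite-dimensional this preserves pseudo-compactness, and since $M$ carries no differential the square of the differential on $N\otimes M$ is left multiplication by $h_B\otimes 1=h_{E'}$, as required. For $G'$ the honest definition is $G'(L)=\Hom_{\End M}(M,L)$, with $L$ regarded as an $\End M$--module through its underlying $(1_B\otimes\End M)$--action; the residual $B$--action then makes $\Hom_{\End M}(M,L)$ a $B$--module whose differential squares to left multiplication by $h_B$. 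The expression $\Hom_{E'}(B\otimes M,L)$ is a mnemonic recording the restriction--induction adjunction along $B\otimes M\cong E'\otimes_{\End M}M$; since $B\otimes M$ is literally an $E'$--module only when $B$ is uncurved (a curved $B$ is not a module over itself), I would take $\Hom_{\End M}(M,-)$ as the definition and treat the displayed isomorphism as part of what is to be checked.

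Next I would exhibit the unit and counit as isomorphisms. For the counit $F'G'(L)=\Hom_{\End M}(M,L)\otimes M\to L$ I would use evaluation $\phi\otimes m\mapsto\phi(m)$; because $\End M\cong\operatorname{Mat}_n(\ground)$ is semisimple with unique simple module $M$, every $\End M$--module is a sum of copies of $M$ and evaluation is a natural, $\End M$--linear isomorphism, hence $B$--linear and compatible with the differential. For the unit $N\to G'F'(N)=\Hom_{\End M}(M,N\otimes M)$ I would use the natural isomorphism $\Hom_{\End M}(M,N\otimes M)\cong N\otimes\Hom_{\End M}(M,M)$ (valid as $M$ is finite-dimensional) together with Schur's lemma $\Hom_{\End M}(M,M)\cong\ground$. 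In both cases these are the classical Morita isomorphisms tensored with $B$, so once the module structures are unwound the checks of degree, $B$--linearity and commutation with $d$ are routine, and the curvature conditions hold automatically because $M$ and $\End M$ are uncurved and $h_{E'}=h_B\otimes 1$.

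It then remains to upgrade this equivalence of categories to a Quillen equivalence. Both $F'=(-)\otimes M$ and $G'=\Hom_{\End M}(M,-)$ are exact---tensoring with a finite-dimensional space is exact, and $\Hom_{\End M}(M,-)$ is exact because $M$ is projective over the semisimple algebra $\End M$. Hence each functor preserves surjections (the fibrations), preserves filtered quasi-isomorphisms (the weak equivalences), and preserves injections with projective cokernel, which are exactly the cofibrations by the characterization used in the proof of Proposition~\ref{obprop}. Thus $F'$ and $G'$ are simultaneously left and right Quillen; being mutually inverse equivalences of categories that preserve weak equivalences, their derived unit and counit coincide with the invertible underived ones, so the adjunction is a Quillen equivalence.

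Since the individual computations are elementary, the main obstacle is conceptual bookkeeping rather than any hard estimate: one must track carefully which structures are curved and which are not. The two delicate points are (i) justifying that $\Hom_{\End M}(M,L)$ is genuinely a $B$--module with the correct curvature, even though $\End M\hookrightarrow E'$ is not a strict morphism of curved algebras, and (ii) confirming that finite-dimensionality of $M$ forces $G'$ to land in \emph{pseudo-compact} modules and to be compatible with the completed tensor products of the pseudo-compact setting. Both reduce to the single observation that $M$ and $\End M$ sit in degree zero with neither differential nor curvature, so all curvature is inherited unchanged from $B$.
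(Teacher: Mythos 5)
Your proposal follows essentially the same route as the paper: both reduce the statement to classical Morita theory for $\End M\cong\operatorname{Mat}_n(\ground)$, establish the same unit and counit isomorphisms (the paper verifies $F'G'(L)\cong L$ via the adjunction chain $\Hom_{\End M}(M,L)\otimes M\cong\Hom(M^*,\Hom_{\End M}(M,L))\cong\Hom_{\End M}(M^*\otimes M,L)\cong L$, where you use evaluation plus semisimplicity of $\End M$; this is the same computation packaged differently), and then check the Quillen conditions against the same characterizations of cofibrations and fibrations. Your observation that $\Hom_{\End M}(M,-)$ must be taken as the actual definition of $G'$ --- since $B\otimes M$ is not literally an $E'$--module when $B$ is curved, a curved algebra not being a module over itself --- is a genuine point of care that the paper's statement passes over silently; its proof indeed only ever uses $\Hom_{\End M}(M,-)$.

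One step, however, is justified incorrectly as written. You claim that exactness of $F'$ and $G'$ implies (``Hence'') that each preserves injections with projective cokernel. Exactness preserves injectivity and identifies the cokernel of $F'(f)$ as $P\otimes M$, but it says nothing about whether that cokernel is projective over $E'$: preservation of projectivity is not a consequence of exactness. The paper supplies the missing argument in two halves: $P\otimes M$ is a direct summand of $P\otimes\End M$, which is a projective $E'$--module, so $F'$ preserves cofibrations; and $M$ is projective over $\End M$ (being a summand of $\End M$ itself), so maps from $M$ lift against surjections and $G'$ preserves fibrations. Alternatively, since by that point you have shown $F'$ and $G'$ to be mutually inverse equivalences of the underlying graded module categories, they preserve projectives for purely categorical reasons. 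Either repair closes the gap, but some such argument must replace the appeal to exactness.
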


\begin{proof}
We have natural isomorphisms:
\begin{eqnarray*}
G'F'(N)&=&\Hom_{\End M}(M, N\otimes M)\\
&\cong&\Hom_{\End M}(M, M)\otimes N\cong \ground \otimes N=N,\\\\
F'G'(L)&=&\Hom_{\End M}(M, L)\otimes M\\
&\cong&\Hom(M^*,\Hom_{\End M}(M,L))\\
&\cong&\Hom_{\End M}(M^*\otimes M,L)\\
&\cong&\Hom_{\End M}(\End M,L)=L.
\end{eqnarray*}

It remains to verify that $F'$ preserves cofibrations and that $G'$ preserves fibrations.  As a module over itself $\End M$ is a direct sum of copies of $M$.  So $M$ is a projective $\End M$ module.

Thus given a surjective map $f\colon L_1 \to L_2$, the induced map $G(f)$ is surjective, as we may lift any element $g \in \Hom_{\End M}(M, L_2)$:
$$
\xymatrix{
&M\ar[d]^g\ar@{.>}[dl]\\
L_1\ar[r]^f&L_2
}
$$
Thus $G'$ preserves fibrations.

A cofibration of $B$--modules $f\colon N_1 \to N_2$ is characterized by being part of an exact sequence:$$0 \to N_1 \stackrel{f}\to N_2 \to P \to 0,$$
for some $B$-module that is projective as a graded $B$-module, forgetting differentials.
  Applying $F'$ we obtain the exact sequence:
$$0 \to N_1 \otimes M\stackrel{f\otimes 1_M}\to N_2 \otimes M\to P\otimes M \to 0.$$
To deduce that $F'$ preserves cofibrations we need only note that $P \otimes M$ is a summand of the projective module $E'$--module $P \otimes \End M$.
\end{proof}

\section{Composing the equivalences}
We now have all the ingredients to prove our main theorem. We have the following chain of Quillen equivalences:$$
A\modcat \leftrightarrow (\overline{B}A\modcat)^{\op}\leftrightarrow (\overline{B}A\otimes\End M\modcat)^{\op}.$$

Note that both $\overline{B}A$ and $\overline{B}A\otimes\End M$ are curved pseudo-compact algebras. Using Example \ref{bigexample} (3) we recognize
$\Hochb(A,\End M)$ as an appropriate twisting $(\overline{B}A\otimes\End M)^\xi$ and using Proposition \ref{obprop} conclude that $E':=\overline{B}A\otimes\End M$--modules are Quillen equivalent to $E:=\Hochb(A,\End M)$--modules.

Moreover, the composition of functors from left to right is clearly $F$ as defined in Theorem \ref{mainres}.  This completes the proof of Theorem \ref{mainres}.

Tracing the functors going in the opposite direction we can give the following explicit description of the inverse equivalence $(E\modcat)^{\op}\to A\modcat$.  Let $L$ be in $E$-mod.  Considering the canonical element $\xi$ in $E$ we have $E^{-\xi}=\overline{B}A\otimes\End M$ and so $L^{[-\xi]}$ is a module over the curved dg algebra $\overline{B}A\otimes\End M$.

Applying $G'$ returns the $\overline{B}A$ module $\Hom_{\End M}(M,L^{[-\xi]})$.  In order to dualize this we employ the following lemma.

\begin{lem}
Given an $\End M$--module K we have{\rm:}
$\Hom_{\End M}(M,K)^* \cong \Hom_{\End M}(K,M)$.
\end{lem}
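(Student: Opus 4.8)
The plan is to reduce the statement to the Morita equivalence between $\End M$ and the ground field $\ground$. The crucial input, available because $M$ is finite-dimensional, is that $\End M$ is a (graded) matrix algebra whose unique simple module is $M$ itself; hence $M$ is a finitely generated projective $\End M$-module, and the double centralizer property gives $\Hom_{\End M}(M,M)\cong\ground$. Consequently every $\End M$-module $K$ has the form $K\cong M\otimes_\ground V$, with $V:=\Hom_{\End M}(M,K)$ and $\End M$ acting on the factor $M$. This decomposition is what makes both sides of the desired isomorphism computable.

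First I would define the candidate map intrinsically, via the composition pairing
\[
\Hom_{\End M}(K,M)\times\Hom_{\End M}(M,K)\longrightarrow\Hom_{\End M}(M,M)\cong\ground,\qquad (f,g)\longmapsto f\circ g.
\]
Sending $f$ to the functional $g\mapsto f\circ g$ yields a transformation $\Phi_K\colon\Hom_{\End M}(K,M)\to\Hom_{\End M}(M,K)^*$. Because $\Phi$ is built purely from composition, it is manifestly natural in $K$ and commutes with whatever differentials are present; this is exactly what will let me pass from the underlying graded statement to the dg statement used in the application, where $K=L^{[-\xi]}$.

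It then remains to verify that $\Phi_K$ is an isomorphism, and here I would invoke the decomposition $K\cong M\otimes_\ground V$. On one side, $\Hom_{\End M}(M,K)^*\cong V^*$ by definition of $V$. On the other side, since $M$ is finitely generated projective over $\End M$, the adjunction $\Hom_{\End M}(M\otimes_\ground V,M)\cong\Hom_\ground\!\big(V,\Hom_{\End M}(M,M)\big)\cong\Hom_\ground(V,\ground)=V^*$ identifies $\Hom_{\End M}(K,M)$ with $V^*$ as well. Tracing the composition pairing through these identifications should show that $\Phi_K$ becomes the identity of $V^*$, and hence is an isomorphism.

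The main obstacle I anticipate is bookkeeping rather than conceptual: one must arrange the two adjunction isomorphisms compatibly so that the composition pairing matches the evaluation pairing $V\times V^*\to\ground$ on the nose (there is a potential transpose or Koszul-sign discrepancy in the graded setting), and one must confirm that $K\cong M\otimes_\ground V$ holds in the graded sense, not merely ungraded. Both points are settled by graded Morita theory for the endomorphism algebra of a finite-dimensional graded vector space, so no further hypotheses on $K$ are needed.
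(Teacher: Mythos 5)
Your proof is correct and takes essentially the same route as the paper: the paper simply specializes its Proposition 6.1 (the covariant Morita equivalence between $B$--modules and $B\otimes\End M$--modules) to $B=\ground$, which yields exactly the facts you re-derive by hand --- that every $\End M$--module decomposes as $K\cong M\otimes_\ground V$ with $V=\Hom_{\End M}(M,K)$, and that $\Hom_{\End M}(M\otimes_\ground V,M)\cong V^*$. Your explicit composition pairing $\Phi_K$ is a harmless refinement (the paper does not name a specific map), and your reduction to graded Morita theory for the matrix algebra $\End M$ is precisely the content the paper imports by citation.
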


\begin{proof}
Replacing $B$ in Proposition \ref{usual101} with $\ground$ gives us the Morita duality between $\ground$ and $\End M$: $$\Hom_{\End M}(K,M)\cong \Hom_\ground(\Hom_{\End M}(M,K), \ground)\cong\Hom_{\End M}(M,K)^*.$$
\end{proof}

Thus applying $G$ to $\Hom_{\End M}(M,L^{[-\xi]})$, we get a twist of the $A\otimes A^{\op}\otimes \overline{B}A^{\op}$--module:
$$A\otimes\Hom_{\End M}(L^{[-\xi]},M).$$

Applying this (un)twist, we obtain that $(A\otimes\Hom_{\End M}(L^{[-\xi]},M))^{[\eta]}$ is a module over  $A\otimes (A^{\op}\otimes \overline{B}A^{\op})^{\eta}=A\otimes\Hochb(A,A)^{\op}$ (where $\eta$ is the corresponding MC element).  Finally, forgetting the action of $\Hochb(A,A)$, we obtain an $A$--module $(A\otimes\Hom_{\End M}(L^{[-\xi]},M))^{[\eta]}$. This is the value of our composite functor $(E\modcat)^{\op}\to A\modcat$ on $L$.

\section{Equivalences between categories of modules and pseudo-compact modules}
We saw that the category of dg modules over a dg algebra is always Quillen equivalent to some category of pseudo-compact modules (Corollary \ref{modcomod}). It is natural to ask when, conversely, the category of pseudo-compact modules over a pseudo-compact dg algebra is Quillen equivalent to the category of dg modules over some dg algebra. The following result gives an answer to that question; here and later on $D(A)$ and $D(B)$ stand for the derived categories of dg $A$-modules and of pseudo-compact $B$-modules respectively, i.e.\ the homotopy categories of the corresponding closed model structures.

 \begin{theorem}\label{BtoA}
Let $B$ be a pseudo-compact dg algebra.
The following are equivalent{\rm:}
\begin{enumerate}
\item
There exists a dg algebra $A$ and an equivalence
$F\colon D(A)\cong D(B)^{\op}$,
\item
$D(B)^{\op}$ admits a compact generator,
\item
$D(B)^{\op}$ admits a finite-dimensional (necessarily compact) generator.
\end{enumerate}
If any of these conditions holds then in fact there exists a dg algebra $A$ and a Quillen equivalence between $A$-mod and $(B\modcat)^{\op}.$

In case $B=B^0$, i.e.\ $B$ is an ordinary pseudo-compact algebra,  any of the three conditions above is equivalent to the following statement{\rm:}
\begin{enumerate}
\item[(4)]
There are finitely many isomorphism classes of simple (non-dg) $B$--modules.
\end{enumerate}
\end{theorem}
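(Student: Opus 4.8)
The plan is to prove the theorem by establishing a cycle of implications $(1)\Rightarrow(2)\Rightarrow(3)\Rightarrow(1)$, together with a separate treatment of the ordinary algebra case $(4)$. The key conceptual input is the general theory of compactly generated triangulated categories, especially the Keller--Toën reconstruction result: if a triangulated category with set-indexed coproducts admits a single compact generator $G$, then it is equivalent to the derived category of dg modules over the (dg) endomorphism algebra $\operatorname{RHom}(G,G)$. The main subtlety throughout is keeping track of the opposite category $D(B)^{\op}$ and the fact that $D(B)$ is a category of \emph{pseudo-compact} modules, so I must verify that the relevant compactness and generation properties behave correctly on the pseudo-compact side.

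First I would record the easy implications. For $(1)\Rightarrow(2)$: the free module $A$ is a compact generator of $D(A)$ (this is standard for derived module categories, since $\operatorname{RHom}_A(A,-)$ computes cohomology and commutes with coproducts), so its image $F(A)$ is a compact generator of $D(B)^{\op}$, as $F$ is a triangulated equivalence. For $(3)\Rightarrow(2)$ there is nothing to do, since a finite-dimensional generator is in particular compact; the parenthetical remark in the statement signals precisely this. The substance is therefore in $(2)\Rightarrow(1)$ and in upgrading $(2)$ to the finite-dimensional generator of $(3)$.

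For $(2)\Rightarrow(1)$, the plan is: given a compact generator $G$ of $D(B)^{\op}$, set $A:=\operatorname{RHom}_{D(B)^{\op}}(G,G)$, a dg algebra, and invoke the reconstruction theorem to conclude $D(B)^{\op}\simeq D(A)$. One then has to promote this to a Quillen equivalence between $A$-mod and $(B\modcat)^{\op}$ rather than a mere derived equivalence; for this I would use that both model structures are of the type appearing earlier in the paper (cofibrantly generated, with the explicit descriptions of cofibrations and fibrations recalled in Section 2), so a derived equivalence induced by a suitable cofibrant/fibrant replacement of $G$ can be rectified to a Quillen adjunction. The hard part will be showing that the compact generator can be taken \emph{finite-dimensional}, i.e.\ $(2)\Rightarrow(3)$: a priori a compact object in $D(B)^{\op}$ need only be a retract of a finite cell object, not literally finite-dimensional. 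Here I would exploit the pseudo-compact structure: compact objects in $D(B)^{\op}$ correspond, under the duality with comodules, to finite-dimensional comodules up to the relevant equivalence, and one argues that the cohomology of a compact generator is finite-dimensional, then replaces $G$ by a finite-dimensional model of it. This passage is where the special features of pseudo-compact (as opposed to arbitrary) modules are genuinely used, and I expect it to be the technical crux.

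Finally, for the ordinary algebra case, the plan is to show $(4)\Leftrightarrow(3)$ when $B=B^0$. The simple $B$-modules, under duality, correspond to the simple comodules, which are the building blocks from which every finite-dimensional (co)module is assembled by finite filtrations; their direct sum is a natural candidate for a finite-dimensional generator. For $(4)\Rightarrow(3)$ I would take $G$ to be the direct sum of representatives of the finitely many isomorphism classes of simples and check, via a standard support/devissage argument, that it generates and is finite-dimensional. For $(3)\Rightarrow(4)$, a finite-dimensional generator has only finitely many simple composition factors, and generation forces every simple to occur among them; since in the ungraded pseudo-compact setting the simples are exactly the duals of the simple $B$-modules, this bounds their number. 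Throughout, I would appeal to the duality between pseudo-compact modules and comodules recalled in the introduction so as to translate finiteness of simples into the finite-dimensional generation hypothesis.
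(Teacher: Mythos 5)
There is a genuine gap, and it sits exactly where you predicted the ``technical crux'' would be: the passage from (2) to (3). Your proposed mechanism --- argue that the cohomology of a compact generator $G$ is finite-dimensional and then ``replace $G$ by a finite-dimensional model of it'' --- is not valid in this setting, because $D(B)$ is a derived category of the \emph{second kind}: the weak equivalences of the model structure on pseudo-compact $B$-modules are (essentially) filtered quasi-isomorphisms, not quasi-isomorphisms. In $D(B)$ an object is not determined by its cohomology, acyclic modules need not be zero, and a quasi-isomorphism between $G$ and some finite-dimensional complex computed from $H(G)$ is in general not an isomorphism in $D(B)$. So even granting that $H(G)$ is finite-dimensional, there is no licit ``finite-dimensional model'' to pass to. The argument that actually works is structural rather than cohomological: by Positselski's result \cite[\S 5.5]{P} the (totally) finite-dimensional modules form a set of compact generators of $D(B)^{\op}$, whence by \cite[Theorem 2.1.3(c)]{HPS} every compact object lies in the thick subcategory generated by \emph{finitely many} finite-dimensional modules. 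If that compact object is a generator, one does not replace it at all: one simply observes that the direct sum of those finitely many finite-dimensional modules is itself a generator (its localizing subcategory contains the given generator), and it is finite-dimensional. This is how the paper proves (1) $\Rightarrow$ (3), and the same argument gives your (2) $\Rightarrow$ (3); without it, your cycle only establishes (1) $\Leftrightarrow$ (2) and (3) $\Rightarrow$ (2).

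A secondary soft spot is (2) $\Rightarrow$ (1). Invoking Keller--To\"en reconstruction for $D(B)^{\op}$ requires knowing that this \emph{opposite} of a coderived category is algebraic and has coproducts, and your subsequent ``rectification to a Quillen adjunction'' is left entirely vague --- it is in fact the whole content of the step. The paper proceeds directly and self-containedly: take the compact generator $N$ cofibrant, set $A:=\End_B(N)$ (honest, not derived, endomorphisms --- legitimate since $N$ is cofibrant and all objects are fibrant), and check from the explicit descriptions of (co)fibrations that $\Hom_A(-,N)\colon A\modcat\to(B\modcat)^{\op}$ and $\Hom_B(-,N)$ form a Quillen adjunction with $F(A)\cong N$ and $G(N)\cong A$; compact generation of $A$ in $D(A)$ and of $N$ in $D(B)^{\op}$ then forces the derived functors to be inverse equivalences. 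This simultaneously produces the Quillen equivalence asserted in the theorem, which your route would still owe after the black-box derived equivalence. Your treatment of the ungraded case (4) is essentially the paper's (simples are compact, devissage via finite composition series, and shifts/retracts/extensions cannot create new composition factors in cohomology), so that part stands once the compactness lemma above is in place.
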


\begin{lem}\label{cogeneration}
Let $B$ be pseudo-compact dg algebra.
Then $D(B)^{\op}$ is compactly generated, and an object of $D(B)^{\op}$ is compact if and only if it is in the thick subcategory of $D(B)^{\op}$ generated by (totally) finite-dimensional modules.
\end{lem}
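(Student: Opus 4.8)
The plan is to pass to the dual side and work with comodules, where the category acquires coproducts in the naive way and where finite-dimensionality becomes a genuine smallness condition. Writing $C = B^*$ for the dg coalgebra dual to $B$, linear duality is a contravariant equivalence between pseudo-compact $B$--modules and $C$--comodules, so that $D(B)^{\op}$ is identified with the coderived category $D(C)$ of dg $C$--comodules. Under this identification coproducts in $D(B)^{\op}$ correspond to coproducts of comodules, and finite-dimensional pseudo-compact $B$--modules correspond to finite-dimensional $C$--comodules (finite-dimensionality being self-dual). It therefore suffices to prove that $D(C)$ is compactly generated by the finite-dimensional comodules and that its compact objects are exactly the thick subcategory they generate.

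For generation I would use the fundamental theorem of comodules: since $\ground$ is a field, every $C$--comodule is the filtered union of its finite-dimensional subcomodules, and the same holds degreewise for dg comodules. The category of $C$--comodules is a Grothendieck (AB5) category, so filtered colimits are exact; hence for any dg comodule $X = \colim_i X_i$, the $X_i$ ranging over the directed system of finite-dimensional dg subcomodules, the telescope sequence
\[
0 \to \bigoplus_i X_i \xrightarrow{1-s} \bigoplus_i X_i \to X \to 0
\]
is exact, exhibiting $X$ as the cone of a map between coproducts of finite-dimensional comodules. Thus $X$ lies in the localizing subcategory of $D(C)$ generated by the finite-dimensional comodules; as every object of $D(C)$ is represented by such an $X$, the finite-dimensional comodules generate.

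The substantive point is compactness: I must show that for finite-dimensional $V$ the functor $\Hom_{D(C)}(V,-)$ preserves coproducts. Coproducts in $D(C)$ are computed termwise and are exact (the class of coacyclic complexes is closed under coproducts), and every coproduct is the filtered colimit of its finite partial sums, so it is enough to show that $\Hom_{D(C)}(V,-)$ commutes with filtered colimits. Because the category of $C$--comodules is locally finite, a finite-dimensional comodule is finitely presented, so $\Hom_C(V,-)$ commutes with filtered colimits already at the abelian level; since cohomology commutes with filtered colimits, so do $\Hom^\bullet_C(V,-)$ on complexes and $\Hom_{K(C)}(V,-)$ on the homotopy category. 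The delicate step is transporting this across the localization $K(C) \to D(C)$: here one uses that the coacyclic complexes are closed under filtered colimits (by the same telescope argument as above) so that the localization commutes with filtered colimits, together with the identification of the coderived category with the homotopy category of complexes of injectives, dual to the description of fibrant and cofibrant objects in \cite[Theorem 8.2]{P}. I expect this compatibility of injective (fibrant) replacement with filtered colimits to be the main obstacle.

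Granting that the finite-dimensional comodules are compact generators, both halves of the lemma follow from the standard structure theory of compactly generated triangulated categories. Such a category is compactly generated, and by Neeman's theorem its subcategory of compact objects coincides with the thick subcategory generated by any set of compact generators. Applying this to the generating set of finite-dimensional comodules yields exactly the asserted description of the compact objects of $D(B)^{\op}$.
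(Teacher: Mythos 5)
Your overall architecture matches the paper's: exhibit the finite-dimensional modules as a set of compact generators, then invoke the thick-subcategory theorem \cite[Theorem 2.1.3(c)]{HPS} to identify the compact objects with the thick subcategory they generate. The translation to comodules is harmless (the paper treats the two languages as equivalent). The difference is that the paper simply quotes Positselski \cite[\S5.5]{P} for the compact generation statement, whereas you attempt to prove it, and it is exactly there that your argument has genuine gaps. First, the telescope sequence you write down does not exist for a general filtered system: the ``shift'' $s$ presupposes a choice of successor for each index, and the cokernel of $1-s$ computes the colimit only over the subdiagram generated by the chosen maps $i\to s(i)$, which recovers the full colimit only when the system has a countable cofinal chain. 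The poset of finite-dimensional dg subcomodules of $X$ has no countable cofinal subset once $X$ is of uncountable dimension, so even your generation step is incomplete as written (it can be repaired by transfinite induction, or by citing \cite{P}).

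Second, and more seriously, your compactness argument replaces the statement to be proved -- that $\Hom_{D(C)}(V,-)$ preserves coproducts -- by the strictly stronger statement that it preserves filtered colimits, and this rests on two claims you never establish: that coacyclic complexes are closed under filtered colimits (asserted ``by the same telescope argument as above'', which, as above, is only valid for sequential colimits), and that fibrant (injective) replacement is compatible with filtered colimits, which you yourself flag as ``the main obstacle'' and leave open. That step is the actual mathematical content of the lemma, not a routine verification, so the proposal as it stands is a strategy outline with the crucial step missing. Note also that the filtered-colimit detour is unnecessary: since the comodule category is locally finite, coproducts of graded-injective comodules are graded-injective, and coacyclic complexes are closed under coproducts, so a coproduct of fibrant replacements $J_\alpha$ of $X_\alpha$ is a fibrant replacement of $\bigoplus_\alpha X_\alpha$; then any homogeneous map from a totally finite-dimensional $V$ into $\bigoplus_\alpha J_\alpha$ has finite-dimensional image and factors through a finite sub-coproduct, giving
\[
\Hom_{D(C)}\bigl(V,\bigoplus_\alpha X_\alpha\bigr)\cong \bigoplus_\alpha \Hom_{D(C)}(V,X_\alpha)
\]
directly, with no filtered colimits at all. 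Either fill your gap with this coproducts-of-injectives argument, or do as the paper does and cite \cite[\S5.5]{P}.
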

\begin{proof}
By {\cite[\S5.5]{P}}, the finite-dimensional $B$--modules form a set of compact objects that generate $D(B)^{\op}$.
It follows that the compact objects in $D(B)^{\op}$ are generated as a thick subcategory of $D(B)^{\op}$ by the finite-dimensional modules \cite[Theorem 2.1.3(c)]{HPS}.
\end{proof}

\begin{proof}[Proof of Theorem \ref{BtoA}] Given an equivalence $F$, we have that $F(A)$ is a compact generator of $D(B)^{\op}$. Hence, by Lemma~\ref{cogeneration}, $F(A)$ is obtained from a finite set of totally finite-dimensional $B$--modules by a sequence of shifts, extensions and retractions. Thus the direct sum of these finite-dimensional modules is a compact generator of $D(B)^{\op}$.
This proves that (1) implies (3), and (3) obviously implies (2).

Now suppose (2) holds: let $N$ be a compact generator of $D(B)^{\op}$. Without loss of generality we may assume $N$ is cofibrant. Define a dg algebra $A:=\End_B(N)$. Then the functor
$$F(-)=\Hom_A(-,N)\colon A\modcat\rightarrow (B\modcat)^{\op}$$
is left adjoint to the functor
$$G(-)=\Hom_B(-,N)\colon (B\modcat)^{\op}\rightarrow A\modcat.$$
Recall \cite{P} that cofibrations in $A$-mod are the injective maps with cofibrant (in particular projective) cokernel, whereas fibrations are the surjective maps.
Similarly
fibrations in $(B\modcat)^{\op}$ are the injective maps with projective cokernel, and fibrations are the surjective maps. It is then easy to see that
$F$ preserves cofibrations and $G$ preserves fibrations. Moreover $F(A)\cong N$ and $G(N)\cong A$. Since $A$ and $N$ are compact generators in $D(A)$ and  $D(B)$ respectively, it follows that $F$ and $G$ induce inverse equivalences of $D(A)$ and $D(B)^{\op}$.

Suppose now that $B$ is an ordinary pseudo-compact algebra. Simple $B$--modules may be regarded as simple dg $B$--modules concentrated in degree $0$, and any simple dg $B$--module arises in this way, up to grading shift.
If $B$ has finitely many isomorphism classes of simple modules, their direct sum is then a compact generator, as any finite-dimensional dg $B$--module has a finite composition series.

Conversely suppose $B$ has a finite-dimensional (compact) generator $N$.
It suffices to show that any simple $B$--module $S$ is a composition factor of the cohomology $H(N)$, regarded as a (finite-dimensional) ungraded $B$--module. To confirm the latter, note that $S$, as a compact object in $D(B)^{\op}$, is contained in the thick subcategory of $D(B)^{\op}$ generated by $N$, and observe that shifts, retracts and extensions of dg $B$--modules cannot create new composition factors in cohomology.
\end{proof}

\begin{rem} The obvious generalisation  of condition (4) in Proposition~\ref{BtoA}
to arbitrary pseudcompact dg algebras $B$
would be the following{\rm:}
\begin{enumerate}
\item[(4')]
There are finitely many isomorphism classes of simple dg $B$--modules, up to grading shift.
\end{enumerate}
As a case in point, the pseudo-compact dg algebra $E=\Hochb^\bullet(A,\End M)$ of Theorem~\ref{mainres} has a unique simple dg module up to isomorphism and grading shift -- the module $M$.
\end{rem}

\begin{rem}
Let $G$ be an affine group over a field $k$. The category of $G$--modules may be identified with the category of $k[G]$-comodules, or equivalently, with the category of modules over the pseudo-compact algebra $k[G]^*$. Thus it is natural to consider the derived category $D(k[G]^*)$ and to ask for which $G$ it admits a compact generator. By Theorem~\ref{BtoA} this holds if and only if there are finitely many isomorphism classes of simple $G$--modules.

In case $G$ is a smooth affine algebraic group over an algebraically closed field, we can give the following answer: $G$ has finitely many simple modules if and only if $G^0$, the connected component of the identity, is unipotent. Indeed, since $G^0$ is a normal subgroup of finite index in $G$, the standard arguments of Clifford theory imply that $G$ has finitely many simple modules precisely when $G^0$ does. If $G^0$ is unipotent it has a unique simple module. On the other hand if $G^0$ is not unipotent, it has a nontrivial reductive quotient and thus infinitely many isomorphism classes of simple modules.
\end{rem}
\section{Comparison with classical Morita duality} Our main result, Theorem \ref{mainres}, is a Quillen anti-equivalence, or duality between two module categories; it is given as a kind of derived Hom functor. This suggests a close relationship with Morita duality
 \cite{Morita, Lam} which also studies contravariant equivalences between various categories of modules. We will see that that our result can indeed be viewed as an extension of Morita duality in the case when the algebra in question is finite dimensional and of finite global dimension.

 Let us first present a kind of derived Morita duality when $A$ is an ordinary (i.e.\ non-dg) finite-dimensional algebra; this will be our standing assumption in this section. If $M$ is a finite-dimensional injective cogenerator of the category of $A$ modules and $\Gamma:=\End_A(M)$ then the category of finite-dimensional $A$--modules is anti-equivalent to the category of finite-dimensional $\Gamma$--modules via the functors
$F\colon N\mapsto \Hom_A(N,M)$ and $G\colon L\mapsto \Hom_{\Gamma}(L,M)$ (see e.g.\ \cite[Theorem 7.11]{Sk}). Note that since $\Gamma$ is a finite dimensional algebra, it makes sense to consider the category of its (left) pseudo-compact modules which we will denote by $\Gamma^{\ps}$--mod; note that its opposite category is naturally identified with the category of  $\Gamma$-comodules.   The following result is an easy extension of this version of Morita duality.
\begin{theorem}
The functors $F$ and $G$ determine an anti-equivalence between the abelian categories $A$--mod and $\Gamma^{\ps}$--mod.
\end{theorem}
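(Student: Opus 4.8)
The plan is to bootstrap the classical finite-dimensional Morita duality $F, G$ up to the pseudo-compact setting by passing to filtered colimits on the $A$-side and cofiltered limits on the $\Gamma$-side, exploiting that both $A$ and $\Gamma$ are finite-dimensional. First I would record the two structural identifications that make the statement essentially formal. Since $A$ is finite-dimensional, every $A$-module $N$ is the filtered union of its finite-dimensional submodules $N_i$ (each $An$ is a quotient of $A$, hence finite-dimensional), so $A\modcat$ is the ind-completion of the category of finite-dimensional $A$-modules. Dually, over the finite-dimensional algebra $\Gamma$ a pseudo-compact module is a cofiltered inverse limit of finite-dimensional $\Gamma$-modules, so $\Gamma^{\ps}\modcat$ is the pro-completion of the category of finite-dimensional $\Gamma$-modules (this is consistent with the identification, recalled above, of $(\Gamma^{\ps}\modcat)^{\op}$ with $\Gamma$-comodules, which are filtered unions of their finite-dimensional subcomodules). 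Because the opposite of a pro-completion of a category is the ind-completion of its opposite, the asserted anti-equivalence is precisely the ind/pro extension of the contravariant equivalence between finite-dimensional $A$- and $\Gamma$-modules furnished by classical Morita duality.

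Next I would check that the stated functors realize this extension explicitly. Writing $N=\colim_i N_i$ with the $N_i$ finite-dimensional, one has $F(N)=\Hom_A(N,M)=\lim_i\Hom_A(N_i,M)$, a cofiltered inverse limit of the finite-dimensional $\Gamma$-modules $\Hom_A(N_i,M)=F(N_i)$ (the transition maps are restrictions, surjective because $M$ is injective). Hence $F(N)$ is naturally a pseudo-compact $\Gamma=\End_A(M)$-module, the $\Gamma$-action being post-composition and the topology being the inverse-limit one. In the other direction, for a pseudo-compact $L=\lim_j L_j$ the functor $G(L)=\Hom_\Gamma(L,M)$ must be read as \emph{continuous} $\Gamma$-homomorphisms; since $M$ is finite-dimensional and discrete, any such map kills an open submodule and so factors through a finite-dimensional quotient $L_j$, whence $G(L)=\colim_j\Hom_\Gamma(L_j,M)=\colim_j G(L_j)$ is a filtered colimit of finite-dimensional $A$-modules, i.e.\ an ordinary $A$-module.

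I would then verify that $F$ and $G$ are mutually inverse. The decisive structural point, established in the previous step, is that $F$ converts filtered colimits into cofiltered limits while $G$ converts cofiltered limits back into filtered colimits. Granting this, for $N=\colim_i N_i$ one computes $GF(N)=G(\lim_i F(N_i))\cong\colim_i GF(N_i)\cong\colim_i N_i=N$, where the middle isomorphism is the classical Morita isomorphism $GF(N_i)\cong N_i$ applied termwise on finite-dimensional modules; the symmetric computation yields $FG(L)\cong L$. Naturality of both unit and counit is inherited from the finite-dimensional case, and exactness is transported along these (co)limit presentations, so $F$ and $G$ give the desired anti-equivalence of abelian categories.

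I expect the main obstacle to be the bookkeeping in the second and third steps rather than any deep difficulty. Concretely, one must make precise that $\Hom_A(N,M)$ carries exactly the pseudo-compact topology arising from the finite-dimensional submodules of $N$, and — the more delicate point — that $G$ is taken to be continuous $\Hom$, since the naive non-continuous $\Hom_\Gamma(L,M)$ would be far too large to invert $F$. The real content is that the duality must interchange the discrete (ind) direction on the $A$-side with the pseudo-compact (pro) direction on the $\Gamma$-side; the work lies in confirming that the explicit $\Hom$ functors respect these topologies, so that they genuinely implement the abstract ind/pro extension of classical Morita duality and not some larger pair of functors.
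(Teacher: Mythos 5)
Your proof is correct, but it takes a genuinely different route from the paper's. The paper disposes of the theorem in a few lines by factoring $G$ through the category of \emph{ordinary} $\Gamma$--modules: $L \mapsto L^* \mapsto \Hom_{\Gamma}(M^*,L^*)\cong\Hom_{\Gamma}(L,M)$, where the (continuous) linear duality $L\mapsto L^*$ is the standard anti-equivalence between $\Gamma^{\ps}\modcat$ and $\Gamma\modcat$ (available because $\Gamma$ is finite-dimensional), and $\Hom_{\Gamma}(M^*,-)$ is the classical \emph{covariant} Morita equivalence, $M^*$ being a projective generator precisely because $M$ is an injective cogenerator. You instead realize both sides as the ind- and pro-completions of the categories of finite-dimensional modules and extend the finite-dimensional duality formally, then verify that the explicit $\Hom$ functors implement this extension. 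Both arguments are sound and rest on the same two inputs (injectivity/cogeneration of $M$, and the finite-dimensional duality recalled before the theorem), but they distribute the work differently: the paper hides all topological bookkeeping inside the single equivalence $L\mapsto L^*$ and an appeal to covariant Morita theory, whereas you make it explicit --- in particular, your insistence that $G$ be \emph{continuous} $\Hom$ (any map from a pseudo-compact $L$ to the discrete finite-dimensional $M$ factors through a finite-dimensional quotient) is exactly what is implicit in the paper's isomorphism $\Hom_{\Gamma}(M^*,L^*)\cong\Hom_{\Gamma}(L,M)$, and your use of injectivity of $M$ to get surjective transition maps in $\lim_i\Hom_A(N_i,M)$ is what makes the limit/colimit interchange in $GF(N)$ legitimate. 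Your route is longer but more self-contained, verifies both composites $GF\cong\operatorname{id}$ and $FG\cong\operatorname{id}$ directly, and would adapt to situations where no projective generator (hence no covariant Morita equivalence) is available; the paper's route is shorter and makes transparent exactly which classical equivalences are being composed.
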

\begin{proof}
The functor
$G\colon \Gamma^{\ps}$--mod$\to A$--mod can be factored as a composition, \[\Gamma^{\ps}\modcat\longrightarrow
\Gamma\modcat\longrightarrow A\modcat,\] as follows:
\[
L\mapsto L^*\mapsto \Hom_{\Gamma}(M^*,L^*)\cong\Hom_{\Gamma}(L,M),
\]
where $L$ is a pseudo-compact $\Gamma$--module. The functor of linear duality $L\to L^*$ is clearly an anti-equivalence between $\Gamma^{\ps}$--mod and $\Gamma$--mod whereas the functor $\Hom_{\Gamma}(M^*,-)$ is the usual covariant Morita equivalence between $\Gamma$--mod and $A$--mod (note that $M^*$ is a projective generator of $A$--mod since $M$ is an injective cogenerator).
\end{proof}
\begin{rem}\label{absequiv}
It follows that $F$ and $G$ determine an anti-equivalence between the homotopy categories of complexes in $A$-mod and $\Gamma^{\ps}$-mod. Taking the Verdier quotient by the acyclic complexes, we conclude that $D(A)$, the derived category of $A$, is anti-equivalent to $D^I(\Gamma^{\ps})$, the derived category of pseudo-compact $\Gamma$--modules of the \emph{first kind} (cf. \cite{P} concerning this terminology). Note that our previous results were concerned with the derived categories of pseudo-compact modules of the \emph{second kind}.

Recall from \cite{P} that a complex of $A$-modules is \emph{absolutely acyclic} if it belongs
to the minimal thick subcategory of the category $\operatorname{Hot}(A\modcat)$ of $A$-modules up to homotopy,
 containing acyclic bounded complexes. The Verdier quotient of $\operatorname{Hot}(A\modcat)$ by the subcategory of absolutely acyclic complexes is called the \emph{absolute derived category} of $A$ and denoted by $D^{\abs}(A)$. In the same way we can define the absolute derived category $D^{\abs}(\Gamma^{\ps})$ of pseudo-compact $\Gamma$-modules. It follows similarly that the functors $F$ and $G$ determine an anti-equivalence between  $D^{\abs}(A)$ and $D^{\abs}(\Gamma^{\ps})$.
\end{rem}
We observe that there is a simple criterion for the two types of derived categories to coincide.
\begin{lem}\label{absacyclic}
If a finite-dimensional algebra $\Gamma$ has finite global dimension then the following categories coincide{\rm:}
\begin{enumerate}
\item
$D^{\abs}(\Gamma)$ and $D(\Gamma)${\rm;}
\item
$D^{\abs}(\Gamma^{\ps})$, $D(\Gamma^{\ps})$ and $D^I(\Gamma^{\ps})$.
\end{enumerate}
\end{lem}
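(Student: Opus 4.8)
The plan is to derive both statements from one principle of Positselski \cite{P}: in an abelian category of finite homological dimension the classes of acyclic, absolutely acyclic, and second-kind (co- or contra-)acyclic complexes all coincide, so the corresponding Verdier quotients of the homotopy category agree. Since an absolutely acyclic complex is in particular acyclic, and also acyclic of the second kind, there are canonical comparison functors $D^{\abs}(\Gamma)\to D(\Gamma)$ and, on the pseudo-compact side, $D^{\abs}(\Gamma^{\ps})\to D^I(\Gamma^{\ps})$ and $D^{\abs}(\Gamma^{\ps})\to D(\Gamma^{\ps})$. Each is a Verdier quotient by the image of the larger class of acyclics, hence an equivalence exactly when that larger class already consists of absolutely acyclic complexes. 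The whole lemma therefore reduces to the assertion that, under the finite global dimension hypothesis, every acyclic (resp.\ every second-kind acyclic) complex is absolutely acyclic.

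For part (1) I would first note that the abelian category $\Gamma\modcat$ has homological dimension equal to the global dimension of $\Gamma$, which is finite by hypothesis. Granting the principle above, every acyclic complex of $\Gamma$-modules is then absolutely acyclic, so the functor $D^{\abs}(\Gamma)\to D(\Gamma)$ is an equivalence.

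For part (2) the essential preliminary is that $\Gamma^{\ps}\modcat$ also has finite homological dimension. Here I would use that $\Gamma$ is finite-dimensional: continuous linear duality $N\mapsto N^*$ is an exact anti-equivalence between pseudo-compact left $\Gamma$-modules and discrete right $\Gamma$-modules, carrying pseudo-compact projectives to injective right $\Gamma$-modules. Consequently the homological dimension of $\Gamma^{\ps}\modcat$ equals that of the category of right $\Gamma$-modules, namely the global dimension of $\Gamma$, which is finite. Applying the principle on the pseudo-compact side shows that acyclicity, absolute acyclicity and second-kind acyclicity all coincide there, whence the three categories $D^{\abs}(\Gamma^{\ps})$, $D(\Gamma^{\ps})$ and $D^I(\Gamma^{\ps})$ agree.

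The main obstacle is the homological-dimension principle itself: showing that finite global dimension forces every, possibly unbounded, acyclic complex to be absolutely acyclic. The subtlety is that the absolutely acyclic complexes are generated from the bounded acyclic complexes only as a \emph{thick} subcategory, so infinite telescope constructions are unavailable and one must instead fold an unbounded acyclic complex into a finite iteration of bounded acyclic pieces. This is precisely where finiteness of the global dimension enters, through bounded projective resolutions of the cocycles of the complex. Checking this folding argument carefully, and confirming that the parallel resolutions and $\operatorname{Ext}$-computations are genuinely available for pseudo-compact modules so that Positselski's theorem transfers verbatim to $\Gamma^{\ps}\modcat$, is where the real work lies.
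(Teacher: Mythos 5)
Your structural reduction is sound and runs parallel to the paper's: both parts do come down to showing that over an algebra of finite global dimension every acyclic complex is absolutely acyclic, and your transfer to the pseudo-compact side via linear duality plays the same role as the paper's step ``(1) $\implies$ (2) by Remark \ref{absequiv}''. But there is a genuine gap at exactly the point you flag yourself: the ``homological-dimension principle'' is not available as a citation, and you do not prove it. What Positselski proves (Theorems 3.6 and 4.5 of \cite{P}) -- as the paper's remark immediately after this lemma stresses -- is that the classes of the \emph{second kind} (coacyclic, contraacyclic, absolutely acyclic) coincide when the underlying graded ring has finite homological dimension; the coincidence of these with the class of plain acyclic complexes is precisely the new content of the present lemma, valid because $\Gamma$ is concentrated in degree zero, and cannot be quoted from \cite{P} without circularity. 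Deferring the ``real work'' to that citation leaves the lemma unproved.

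Your fallback sketch also points in the wrong direction. You assert that infinite telescope constructions are unavailable because the absolutely acyclic complexes form only a thick subcategory, and propose instead to fold an unbounded acyclic complex into a \emph{finite} iteration of bounded acyclic pieces via projective resolutions of cocycles; it is quite unclear how finitely many cones and retracts of bounded complexes could ever produce a genuinely unbounded one. The paper's mechanism is the exact opposite: finite global dimension is what makes the infinite constructions \emph{available}. By \cite[Theorem 3.6]{P}, when $\Gamma$ has finite global dimension the absolutely acyclic complexes coincide with the coacyclic and contraacyclic ones, hence are closed under arbitrary direct sums and direct products, and therefore under homotopy colimits and limits along directed systems. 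One then writes an arbitrary acyclic complex $M$ as $\operatorname{holim}_n\operatorname{hocolim}_m M\langle n,m\rangle$, where the two-sided canonical truncations $M\langle n,m\rangle$ are bounded acyclic (this is where degree-zero concentration is used -- such truncations do not exist for dg modules over a genuinely graded algebra), hence absolutely acyclic by definition; closure under these homotopy (co)limits finishes part (1), and part (2) follows by transferring the \emph{conclusion}, not the hypothesis, across the duality of Remark \ref{absequiv}. So the missing ingredient in your argument is not a finite folding but the closure of the absolutely acyclic class under infinite sums and products, which is what Positselski's theorem actually provides and where the finiteness hypothesis genuinely enters.
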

\begin{proof}
Note first of all that  (1)$\implies$(2) by Remark \ref{absequiv}.  Thus, it suffices to prove (1). To this end let $M$ be an acyclic complex of  $\Gamma$--modules; we have to show that $\Gamma$ is absolutely acyclic. Let $M\langle n,m\rangle$ be the complex of $\Gamma$--modules such that{\rm :}
\[
M\langle n,m\rangle^i=\begin{cases} M^i \text{~if~} n<i<m \\ \ker d\colon M^n\to M^{n+1} \text{~if~}i=n\\ \operatorname{Coker} d\colon M^{m-1}\to M^m \text{~if~}i=m\\0\text{~if~}i<n \text{~or~} i>m\end{cases}
\]
Then clearly $M\langle n,m\rangle$ is absolutely acyclic and
\[
M\simeq\operatorname{holim}_{ n}\operatorname{hocolim}_{m}M\langle n,m\rangle.
\]
It follows by \cite[Theorem 3.6]{P} that the category of absolutely acyclic complexes is closed with respect to arbitrary direct sums and direct products, and therefore also with respect to homotopy limits and colimits along directed systems . Therefore $M$ is absolutely acyclic.
\end{proof}
\begin{rem}
If $\Gamma$ is a pseudo-compact dg algebra of finite global dimension then Positselski \cite[Theorem 3.6 and 4.5]{P} showed that various derived categories of second kind of $\Gamma$ coincide. The result above shows that under the additional assumption that $\Gamma$ is concentrated in degree zero, these derived categories of second kind also coincide with the ordinary derived categories.
\end{rem}
We can now formulate the main result of this section.
\begin{theorem}\label{finitedim}\
\begin{enumerate}
\item
Let $A$ be a finite-dimensional algebra of finite global dimension. Then the categories of dg $A$--modules and of pseudo-compact dg $A$--modules are Quillen anti-equivalent.
 \item Conversely, suppose that there exists a finite-dimensional algebra $\Gamma$ such that  the categories $D(A)$ and $D(\Gamma^{\ps})^{\op}$ are
 equivalent. Then $A$ and $\Gamma$ both have finite global dimension, and $D(A)$ and $D(\Gamma)$ are equivalent.
 \end{enumerate}
\end{theorem}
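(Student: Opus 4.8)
The plan is to handle the two parts separately: part (1) is deduced from the classical derived Morita duality already set up in this section together with Lemma~\ref{absacyclic}, while part (2) rests on the compactness description of Lemma~\ref{cogeneration} and a finiteness dichotomy for $\operatorname{Hom}$-spaces. For part (1) I would start from a finite-dimensional injective cogenerator $M$ of $A\modcat$ and put $\Gamma:=\End_A(M)$; the standard choice $M=A^*$ yields $\Gamma\cong A$ (up to the left/right conventions of this section), so that pseudo-compact $\Gamma$-modules are precisely pseudo-compact $A$-modules. The anti-equivalence of abelian categories $A\modcat\simeq(\Gamma^{\ps}\modcat)^{\op}$ proved above, together with Remark~\ref{absequiv}, gives an anti-equivalence $D(A)\simeq D^I(\Gamma^{\ps})^{\op}$. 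The crucial point is that $A$ and $\Gamma$ are Morita \emph{equivalent} (via $\Hom_\Gamma(M^*,-)$, as in the proof of that theorem), so $\Gamma$ inherits finite global dimension from $A$; Lemma~\ref{absacyclic}(2) then collapses $D^I(\Gamma^{\ps})$, $D(\Gamma^{\ps})$ and $D^{\abs}(\Gamma^{\ps})$ to a single category, so the first-kind duality lands in the second-kind derived category that is the homotopy category of the model structure used throughout the paper. Since linear duality and the Morita equivalence are exact both ways, they match the injective-cofibration/surjective-fibration structures on the two sides, upgrading the anti-equivalence to a Quillen anti-equivalence.

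For part (2) I would first pass to compact objects. The algebra $A$ is a compact generator of $D(A)$ with compacts the perfect complexes $\operatorname{Perf}(A)$, whereas by Lemma~\ref{cogeneration} the compacts of $D(\Gamma^{\ps})^{\op}$ form the thick subcategory generated by the finite-dimensional modules; for bounded complexes there is no difference between the two kinds of derived category, so the linear-duality anti-equivalence $\Gamma^{\ps}\modcat\simeq(\Gamma\modcat)^{\op}$ identifies this thick subcategory with $D^b_{\mathrm{fd}}(\Gamma)$. The hypothesis thus restricts to a triangulated equivalence $\operatorname{Perf}(A)\simeq D^b_{\mathrm{fd}}(\Gamma)$. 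The key observation is now a finiteness dichotomy: for the finite-dimensional algebra $A$ every space $\bigoplus_n\Hom(P,Q[n])$ between perfect complexes is finite-dimensional, while $\bigoplus_n\operatorname{Ext}_\Gamma^n(S,T)$ for simple $\Gamma$-modules $S,T$ is finite-dimensional exactly when $\Gamma$ has finite global dimension. As a triangulated equivalence preserves this total Hom-finiteness, $\Gamma$ must have finite global dimension.

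Once finite global dimension of $\Gamma$ is established, $D^b_{\mathrm{fd}}(\Gamma)=\operatorname{Perf}(\Gamma)$, so $\operatorname{Perf}(A)\simeq\operatorname{Perf}(\Gamma)$; passing to ind-completions (both categories are compactly generated by their perfect objects) gives $D(A)\simeq D(\Gamma)$. Finally, finite global dimension of a finite-dimensional algebra is a derived invariant, since a derived equivalence matches $\operatorname{Perf}$ with $\operatorname{Perf}$ and $D^b_{\mathrm{fd}}$ with $D^b_{\mathrm{fd}}$ and hence preserves the equality $\operatorname{Perf}=D^b_{\mathrm{fd}}$; therefore $A$ too has finite global dimension, completing part (2).

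I expect the main obstacle to be the finiteness step in part (2): one must check carefully that the thick subcategory of compacts on the pseudo-compact side is genuinely the \emph{bounded} finite-dimensional derived category $D^b_{\mathrm{fd}}(\Gamma)$, so that the second-kind subtleties of \cite{P} do not interfere, and that total Hom-finiteness is really transported across the equivalence and detects global dimension. A secondary delicate point, in part (1), is the passage from the \emph{first-kind} derived category produced by classical Morita duality to the \emph{second-kind} category underlying the paper's model structures — precisely the transition that Lemma~\ref{absacyclic} and the finiteness of global dimension are invoked to make.
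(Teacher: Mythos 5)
Your part (1) is essentially the paper's own argument: with $M=A^*$ one has $\Hom_A(-,A^*)\cong(-)^*$, so your Morita-duality route literally reduces to the paper's direct use of $\ground$-linear duality between $A\modcat$ and $A^{\ps}\modcat$, followed by Lemma~\ref{absacyclic} to identify the homotopy category of the second-kind model structure with $D^I$; this part is fine.

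Part (2) is where you diverge from the paper, and here there are two genuine gaps. First, your identification of the compacts of $D(\Gamma^{\ps})^{\op}$ with $D^b_{\mathrm{fd}}(\Gamma)$ needs more than Lemma~\ref{cogeneration}, which only gives compact generation and the thick-envelope description of the compacts. What you need is that the functor $D^b_{\mathrm{fd}}(\Gamma)\to D(\Gamma^{\ps})^{\op}$ is \emph{fully faithful}, i.e.\ that second-kind Hom's between finite-dimensional modules coincide with ordinary Ext's. Your justification (``for bounded complexes there is no difference between the two kinds'') only shows the functor is well defined on objects (bounded acyclic complexes are absolutely acyclic); it says nothing about morphisms, which in the localized category involve roofs through unbounded objects. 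The statement is in fact true, but proving it requires Positselski's identification of the second-kind derived category with a homotopy category of injective/projective objects (equivalently Krause's description of $K(\operatorname{Inj})$), which is precisely the kind of input you were hoping to avoid. The paper sidesteps this entirely by establishing \emph{first} that $D(\Gamma^{\ps})=D^I(\Gamma^{\ps})$: products and coproducts exist in $D(\Gamma^{\ps})$ because they are transported from $D(A)$ along the assumed equivalence, and then the truncation/holim--hocolim argument of Lemma~\ref{absacyclic} kills all acyclic complexes, not just the absolutely acyclic ones. After that collapse every question is a first-kind question.

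Second, and more seriously, the step ``passing to ind-completions gives $D(A)\simeq D(\Gamma)$'' is not a valid operation on triangulated categories: a compactly generated triangulated category is not formally recovered from its subcategory of compact objects (the ind-completion of a triangulated category is not even triangulated), so an equivalence $\operatorname{Perf}(A)\simeq\operatorname{Perf}(\Gamma)$ does not by itself yield $D(A)\simeq D(\Gamma)$. This can be repaired by invoking Rickard's derived Morita theorem for rings, but within the paper's framework there is a much shorter fix you missed: once $\Gamma$ is known to have finite global dimension, part (1) applied to $\Gamma$ gives $D(\Gamma)\simeq D(\Gamma^{\ps})^{\op}$, and composing with the hypothesis $D(A)\simeq D(\Gamma^{\ps})^{\op}$ finishes the job. (The paper itself obtains $D(A)\simeq D(\Gamma)$ even earlier, from the collapse $D(\Gamma^{\ps})=D^I(\Gamma^{\ps})$ and linear duality, before any discussion of global dimension, and only then uses compactness of finite-dimensional modules to deduce finiteness of global dimension.) Your Hom-finiteness dichotomy for detecting finite global dimension from $\operatorname{Perf}(A)\simeq D^b_{\mathrm{fd}}(\Gamma)$ is a nice, correct alternative to the paper's ``compact implies perfect'' step, but it sits on top of the two unproved claims above.
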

\begin{proof}
Note that the $\ground$-linear duality functor $L\mapsto L^*$ determines an anti-equivalence between the abelian categories $A\modcat$ and $A^{\ps}\modcat$; it is also a Quillen functor between the corresponding closed model categories. Now the homotopy category of $A\modcat$ (in the sense of closed model categories) is the derived category $D(A)$. By Lemma \ref{absacyclic} the homotopy category of the closed model category $A^{\ps}\modcat$ is the same as $D^I(A^{\ps})$ which is then equivalent to $D(A)$ under the functor of linear duality. This proves (1).

Now suppose that $D(A)$ and $D(\Gamma^{\ps})$ are equivalent. Then arbitrary products and coproducts exist in $D(\Gamma^{\ps})$.
Since absolutely acyclic pseudo-compact $\Gamma$--modules vanish in $D(\Gamma^{\ps})$, arguing as in the proof of Lemma~\ref{absacyclic} we see, that the same is true of all acyclic pseudo-compact $\Gamma$--modules. Thus $D(\Gamma^{\ps})$ coincides with $D^I(\Gamma^{\ps})$, and is thus anti-equivalent to $D(\Gamma)$ via linear duality. Now recall from Lemma~\ref{cogeneration} that finite-dimensional $\Gamma$-modules are compact objects in $D(\Gamma^{\ps})^{\op}$. We deduce from the equivalence $D(\Gamma)\cong D(\Gamma^{\ps})^{\op}\colon M\mapsto M^*$ that any finite-dimensional dg $A$--module is compact in $D(A)$, which implies that $A$ has finite global dimension, and therefore so does $\Gamma$.
\end{proof}
\begin{rem}
For a finite-dimensional algebra $A$ there are two natural choices for a finite-dimensional module $M$; namely one can take $M=A$ or $M=A^*$; the latter choice having the advantage of being an injective cogenerator. Then Theorem \ref{mainres} states that the category $A$--mod and $E$--mod where
 $E:=\overline{\Hoch}(A,\End(M))$ are Quillen equivalent. Since $E$ is quasi-isomorphic to $\operatorname{RHom}_A(M,M)$, for
 $M=A$ or $M=A^*$ it is further quasi-isomorphic to $A$.  One can ask whether the above equivalence
simplifies to a Quillen equivalence between $A$--modules and pseudo-compact $A$--modules. Theorem \ref{finitedim} says, in particular, that this is the case only when $A$ has finite global dimension.
\end{rem}

\end{document}